\newbox\noforkbox \newdimen\forklinewidth
\noforkbox\hbox{\lower 2pt\box1\lower 2pt\box0\relax}
\def\unionstick{\mathop{\copy\noforkbox}\limits}
\def\nonfork_#1{\unionstick_{\textstyle #1}}
\newbox\doesforkbox
\doesforkbox\hbox{\lower 2pt\box1 \lower 2pt\box2\lower2pt\box0\relax}
\def\nunionstick{\mathop{\copy\doesforkbox}\limits}
\def\fork_#1{\nunionstick_{\textstyle #1}}
\newcommand{\dnf}{\unionstick}
\newcommand{\nf}{\unionstick}
\newcommand{\dnfb}[4]{#2 \overset{#4}{\underset{#1}{\overline{\nf}}} #3}
\newbox\noforkboxs \newdimen\forklinewidths
\noforkboxs\hbox{\lower 2pt\box1\lower 2pt\box0\relax}
\def\unionsticks{\mathop{\copy\noforkboxs^{\mkern-8mu\mathrm{s}}}\limits}
\def\nonforks_#1{\unionsticks_{\textstyle #1}}
\newbox\doesforkboxs
\doesforkboxs\hbox{\lower 2pt\box1 \lower 2pt\box2\lower2pt\box0\relax}
\def\nunionsticks{\mathop{\copy\doesforkbox}\limits}
\def\forks_#1{\nunionsticks_{\textstyle #1}}
\newcommand{\dnfs}{\unionsticks}
\newcommand{\nfs}{\unionsticks}
\newcommand{\dnfbs}[4]{#2 \overset{#4}{\underset{#1}{\overline{\nfs}}} #3}
\newtheorem{same}{This should never appear}[section]
\newtheorem{defin}[same]{Definition}
\newtheorem{remark}[same]{Remark}
\newtheorem{theorem}[same]{Theorem}
\newtheorem{example}[same]{Example}
\newtheorem{lemma}[same]{Lemma}
\newtheorem{fact}[same]{Fact}
\newtheorem{question}[same]{Question}
\newtheorem{cor}[same]{Corollary}
\newtheorem{nota}[same]{Notation}
\newtheorem*{theorem-2}{Theorem \ref{sta-2}}
\newtheorem*{theorem-3}{Theorem \ref{equiv}}
\newtheorem*{cor-1}{Corollary \ref{abs}}
\newtheorem*{theorem-4}{Theorem \ref{inj-equiv}}
\newtheorem{defin*}{Definition}
\newtheorem*{theorem*}{Theorem}
\newcommand{\skipitems}[1]{%
  \addtocounter{\@enumctr}{#1}%
}
\newcommand{\rest}{\mathord{\upharpoonright}}
\newcommand{\id}{\textrm{id}}
\newcommand\Act{\operatorname{\bf Act}}
\newcommand{\Kk}{\mathbf{K}}
\newcommand\cx{\mathcal {X}}
\newcommand\Set{\operatorname{\bf Set}}
\newcommand{\LS}{\operatorname{LS}}
\newcommand{\leap}[1]{\le_{#1}}
\newcommand{\lea}{\leap{\Kk}}
\newcommand\Acts{\operatorname{\bf Act}}\newcommand\aact{S\text{-}\Acts}
\newcommand{\gtp}{\mathbf{gtp}}
\newcommand{\gS}{\mathbf{gS}}
\newcommand\cs{\mathcal {C}}
\title{On the abstract elementary class of acts with embeddings}
\author{Marcos Mazari-Armida}
\thanks{The first author's research was partially supported by an NSF grant DMS-2348881, a Simons Foundation grant MPS-TSM-00007597 and an AMS-Simons Travel Grant 2022--2024.}
\address{\newline Marcos Mazari-Armida \newline Department of Mathematics \newline Baylor University \newline Waco, Texas, USA}
\urladdr{https://sites.baylor.edu/marcos\_mazari/}
\email{marcos\_mazari@baylor.edu}
\author[ Ji\v{r}\'{\i} Rosick\'{y}]
{ Ji\v{r}\'{\i} Rosick\'{y}}
\thanks{The second author's research was supported by the Grant Agency of the Czech Republic under the grant 22-02964S} 
\address{
\newline  Ji\v{r}\'{\i} Rosick\'{y}\newline
Department of Mathematics and Statistics,\newline
Masaryk University, Faculty of Sciences,\newline
Kotl\'{a}\v{r}sk\'{a} 2, 611 37 Brno,\newline
Czech Republic}
\email{rosicky@math.muni.cz}
\begin{document}

\begin{abstract} We study the class of acts with embeddings as an abstract elementary class.  We show that the class is always stable and show that superstability in the class is characterized algebraically via weakly noetherian monoids.

The study of these model-theoretic notions and limit models leads us to introduce parametrized weakly noetherian monoids and find a characterization of them via parametrized injective acts. Furthermore, we obtain a characterization of weakly noetherian monoids via absolutely pure acts extending a classical result of ring theory.

The paper is aimed at  algebraists and model theorists so an effort was made to provide the background for both.

\end{abstract}


\maketitle

{\let\thefootnote\relax\footnote{{AMS 2020 Subject Classification:
Primary:  03C60, 20M30. Secondary: 03C45, 03C48, 20M12, 20M50 

Key words and phrases.  Acts; Stability; Superstability;  Parametrized injective acts; Weakly noetherian monoids;  Limit models; Abstract Elementary
Classes.}}}

\tableofcontents

\section{Introduction}

Abstract elementary classes (AECs for short) constitute   a model-theoretic framework  introduced by Shelah \cite{sh88} to study  classes of structures which are not first-order axiomatizable. The framework has been significantly developed in the past few decades, see for example \cite{grossberg2002}, \cite{shelahaecbook}, \cite{baldwinbook09}, \cite{bova-tame}. For many years, the focus was solely on developing the abstract theory but recently there has been a push toward finding interactions and applications of the framework to algebra. More precisely, a very fruitful interaction between AECs and module theory has been uncovered, see for example \cite{m3}, \cite{m4}, \cite{mj},  \cite{satr-24}, \cite{mlim}, and \cite{bon-mod}. The objective of this paper is to determine if such interactions can occur outside of module theory. In particular, if they occur with semigroup theory.

This paper focuses on studying the  class of acts with embeddings as an abstract elementary class.  Acts (also known as polygons or $G$-sets) were first introduced in their modern form in the sixties \cite{hoe1}. An $S$-act is a $G$-set where $S$ is only assumed to be a monoid instead of a group.   Their theory has developed into a mature theory as witnessed by \cite{kkm}.

One of the main goals of model theory, and the theory of AECs specifically, is to study dividing lines \cite{sh-div}. A dividing line is a property such that the classes satisfying such a property have some nice behavior while those not satisfying it have a bad one. In this paper, we focus on studying two key dividing lines: stability (see Definition \ref{def-sta}) and superstability (see Definition \ref{def-sup}). They were first introduced in the context of first-order model theory by Shelah in the sixties  \cite{sh1} and later in the context of AECs by Shelah in the nineties \cite{sh394} (see also  \cite{tamenessone} for stability and \cite{grva} for superstability in the context of AECs).

We first  characterize the stability cardinals of the  class of acts with embeddings. The following result mentions the cardinal $\gamma(S)$  which bounds the maximum number of generators of every left ideal of $S$ (see Definition \ref{def-gamma}).

\begin{theorem-2}
Let $\lambda \geq 2^{\operatorname{card}(S) + \aleph_0}$.  The class of  $S\text{-acts}$ with embeddings is stable in $\lambda$ if and only if  $\lambda^{<\gamma(S)} = \lambda$.
\end{theorem-2}

It follows directly from the theorem above that the class of acts with embeddings is \emph{always} stable. For the reader familiar with first-order model theory of acts, observe that this result does not contradict the existence of unstable complete  first-order theories of acts \cite{mus} as types in the context of this paper are essentially quantifier-free types (see Fact \ref{b-types}).

Regarding superstability, we characterize it algebraically via weakly noetherian monoids. Recall that a monoid is weakly left noetherian if every left ideal is finitely generated, see \cite{mil} for more on the algebraic study of these monoids. 

\begin{theorem-3}
Assume $S$ is a monoid. 
$S$ is weakly left noetherian if and only if the class of  $S\text{-acts}$ with embeddings is superstable.
\end{theorem-3}

The theorem above is an extension of \cite[3.12]{maz1} from modules to acts and provides further evidence that superstability is an interesting algebraic notion. It is worth recalling  that there are weakly noetherian monoids for which injective acts are not closed under coproducts (see Remark \ref{lim=ss}). Therefore, Theorem \ref{equiv} puts a bound on how far the arguments given in \cite[\S 5]{mj} can be extended, partially solving the problem stated in the last line of \cite{mj}.

The proof of Theorem \ref{equiv} relies on the algebraic study of limit models  (see Definition \ref{def-limit}). Limit models are universal models with some level of homogeneity. They were introduced by Kolman and Shelah \cite{kosh} and are a key notion that has emerged in the study of AECs  \cite{shvi}, \cite{grvavi}, \cite{grva}, \cite{vasey18}, \cite{limit25}. It is the authors thesis that they are a very interesting algebraic object.

It is known that limit models have a close connection with relative injective objects for modules (see for example \cite{mlim}, \cite{mj}) and in this paper we show that such connection occurs with weakly injective objects for acts (see Section 4.1). Furthermore, we use limit models to obtain an algebraic characterization of parametrized weakly noetherian monoids via parametrized weakly injective acts. The next theorem generalizes \cite[3.7]{g-co} which obtained the result for $\kappa=\aleph_0$.

\begin{theorem-4}
Assume $\kappa$ is a regular cardinal. The following are equivalent.
\begin{enumerate}
\item $S$ is weakly left  $(<\kappa)$-noetherian.
\item  Every $\kappa$-injective act is weakly injective. 
\end{enumerate}
\end{theorem-4}

Finally, we use limit models to obtain a characterization of weakly noetherian monoids via absolutely pure acts. This characterization  extends the classical characterization of noetherian rings via absolutely pure modules obtained by Megibben in the seventies \cite[Theorem 3]{meg}. 

\begin{cor-1}
Assume $S$ is a monoid.
Every absolutely pure $S$-act is weakly injective if and only if $S$ is weakly noetherian.
\end{cor-1}

The proof of the above corollary is not a generalization of the argument for modules: it is a completely different argument which uses in an essential way limit models (see Remark \ref{dif-mod}).

As mentioned at the beginning of this introduction, the objective of the paper was to determine if any meaningful interactions between AECs and algebra could happen outside of module theory. The results of this paper showcase that this is possible by finding interesting interactions and applications of AECs to semigroup theory.

It is worth pointing out that this is not the first model-theoretic study of classes of acts. There is a rich theory developed in the context of first-order model theory for acts, see for example \cite{g-inj}, \cite{mus}, \cite{iva}, \cite{gmps}, \cite{fogo}. The difference between this work and that work is the framework. Although at first AECs can seem convoluted, it is the authors' opinion that they are a more natural set up to study classes of acts in the context of model theory than first-order model theory. We hope that this paper helps convince the reader that this is the case. Furthermore, it is worth noting that Theorem \ref{sta-2} and Theorem \ref{equiv} can be seen as  extensions of two classical results of the first-order model theory of acts \cite{iva}, \cite[3.5]{fogo} (see Remark \ref{first-ext}).


Finally, most of the results we obtain in this paper can be extended from acts to any presheaf category  $\Set^\cs$ where $\cs$ is a small category.\footnote{In the case of S-acts, $\cs$ is the category with one object $*$, one arrow $s: * \to *$ for every $s \in S$ and composition of arrows given by multiplication in $S$.} This is possible by working as in \cite[\S 2]{cfkmr}. We do not work in this general set up in the paper for clarity and since once the set up has been introduced (as in \cite[\S 2]{cfkmr})  the extension is more a booking keeping issue than a mathematical one.

The paper is divided into four sections and an Appendix. Section 2 presents necessary background and basic results. Section 3  characterizes the stability cardinals of the AEC of acts with embeddings. The reader interested in application can skip Subsection 3.2. Section 4 studies limit models in the class of acts with embeddings. We use these to characterize superstability via weakly noetherian monoids and to obtain some new algebraic results. Appendix A contains proofs of some of the main results of the paper using category theory.

 We would like to thank  Daniel Herden and Jonathan Feigert  for discussions around the topics of this paper. We would like to thank Jeremy Beard, Victoria Gould, Wentao Yang and two  anonymous referees for comments that help improve the paper. In particular, we would like to thank Victoria Gould for bringing to our attention \cite[3.5]{fogo} and one of the anonymous referees for Example \ref{ex-ga}.

\section{Preliminaries and basic results}
We present a brief introduction both to acts theory and abstract elementary classes. A more in-depth introduction to acts theory is presented in \cite{kkm}. A more in-depth introduction to abstract elementary classes is presented in \cite[\S \S 4 - 8]{baldwinbook09}. 

\subsection{Acts} Let $S$ be  a monoid with $1 \in S$ as the identity. $A$ is an (left) \emph{S-act} if for every $s\in S$ and $a\in A$ we have $sa\in A$ and $1a=a$ and $s(t(a))=(st)a$ for every $s, t \in S$ and $a \in A$.  We assume that the empty set is an act (which is not allowed in \cite{kkm}). A function $f : A \to B$ is an \emph{S-homomorphism} of acts if $f(sa)=sf(a)$ for every $a \in A$ and $s \in S$. We denote by $S$-$\Act$ the category of S-acts with S-homomorphisms.

 Given $f: A \to B$ a homomorphism, $f$ is an \emph{embedding} or \emph{monomorphism} if $f$ is injective. Furthermore, if the inclusion from $A$ to $B$ is a monomorphism we say that \emph{$A$ is a subact of $B$} and denote it by $A \leq B$. Observe that monomorphisms in $S$-$\Act$ are regular, i.e., they are equalizers of a pair of morphisms.  

S-acts form a variety of unary universal algebras with unary operations $s \cdot $ for every $s \in S$. Hence the category
$S$-$\Act$ is complete and cocomplete. 
Moreover, the forgetful functor $U:S$-$\Act\to\Set$ to the category of sets with functions
preserves limits and colimits.

 In particular,  if $f_1: B \to A_1, f_2: B \to A_2$ are two S-monomorphisms, then the pushout 
$$
		\xymatrix@=2pc{
				B \ar [r]^{f_1}\ar[d]_{f_2} & B_1 \ar[d]^{q_2}\\
			B_2  \ar [r]_{q_1}& P
		}
		$$ 
is the pushout of underlying sets in $\Set$ where the action on $P$ is given by those in $B_1$ and $B_2$. Hence $q_1$ and $q_2$ are monomorphisms. 		
 
Dually, epimorphisms are surjective homomorphisms and they are regular,
i.e., coequalizers of a pair of morphisms. If $f_1: B_1 \to B$ is an epimorphism and
$$
		\xymatrix@=2pc{
				P \ar [r]^{q_2}\ar[d]_{q_1} & B_1 \ar[d]^{f_1}\\
			B_2  \ar [r]_{f_2}& B
		}
		$$ 
a pullback then $q_1$ is an epimorphism. Hence $S$-$\Act$ is a regular category. $S$-$\Act$  is also coregular because pushouts preserve monomorphisms along any homomorphism (and not only along monomorphisms as we stated above).









Given $X \subseteq A$, let $SX = \{ sx : s\in S, x \in X \}$ be the \emph{subact of $A$ generated by $X$}. Observe that in particular if $B$ is a subact of $A$ and $X \subseteq A$, then $S(B \cup X)= B \cup SX$ as there are only unary operations.  An S-act $A$ is \emph{cyclic}
if it is generated by a single element. Equivalently, $A$ is cyclic if there is  an epimorphism from $S$ to $A$.

We say that $I \subseteq S$ is a (left) \emph{ideal} of $S$ if $SI \subseteq I$,
i.e., if $I$ is a subact of $S$.  Recall that given a cardinal $\lambda$, $\lambda^+$  is the least cardinal greater than $\lambda$. 
The next notion will play a key role in this paper.

\begin{defin}\label{def-gamma}
Given a monoid $S$, let $\gamma(S)$ be the minimum infinite cardinal such that every left ideal of $S$ is generated by fewer than $\gamma(S)$ elements. Let $\gamma_r(S)= \gamma(S)$ if $\gamma(S)$ is a regular cardinal and $\gamma(S)^+$ if $\gamma(S)$ is a singular cardinal. 
\end{defin}
 
 We provide an instructive example which was communicated to us by an anonymous referee. 
\begin{example}\label{ex-ga}
Given $\lambda$ an infinite cardinal, let $S_\lambda = \lambda \cup \{ \lambda\}$ and $\alpha \cdot \beta = \operatorname{min} \{ \alpha, \beta \}$. One can show that $(S_\lambda, \cdot)$ is a commutative monoid with identity  $\lambda$. Moreover, every ideal of $S_\lambda$ is of the form $\alpha$ for $\alpha \leq \lambda$. Therefore, if $\lambda$ is a singular cardinal then $\gamma(S_\lambda)= \lambda$; and if $\lambda$ is a regular cardinal then  $\gamma(S_\lambda)= \lambda^+$. 
\end{example}

We will use many concepts from the theory of acts such as weakly injective acts, $\lambda$-injective acts and absolutely pure acts, but we introduce them throughout the text for the convenience of the reader.

\begin{remark}\label{card}
Let $\lambda \geq \operatorname{card}(S)+\aleph_0$. We have that $\lambda^{<\gamma_r(S)}= \lambda$ if and only if $\lambda^{<\gamma(S)}=\lambda$. If $\gamma(S)$ is a regular cardinal there is nothing to show, so we may may assume that $\gamma(S)$ is singular. The forward direction is clear and the backward direction can be obtained as in \cite[5.16]{jec}.
\end{remark}
 
\subsection{Abstract elementary classes} \emph{Abstract elementary classes} (AECs for short) were introduced by Shelah \cite{sh88} to study classes of structures axiomatized in infinitary logics.  An \emph{AEC} is a pair $\Kk=(K, \lea)$ where $K$ is a class of structures in a fixed language and $\lea$ is a partial order on $K$ extending the substructure relation such that $\Kk$ is closed under isomorphisms and satisfies the  coherence property, the L\"{o}wenheim--Skolem--Tarski axiom and the Tarski--Vaught axioms. The L\"{o}wenheim--Skolem--Tarski axiom is an an instance of the Downward L\"{o}wenheim--Skolem theorem and the Tarski--Vaught axioms assure us that the class is closed under directed colimits. The reader can consult the definition in \cite[4.1]{baldwinbook09} or \cite[2.1]{maz1}.

In this paper, we will only work with the \emph{abstract elementary class of (left) S-acts with embeddings} and we will denote it by $\Kk_{\aact} = (S\text{-acts}, \leq)$. In this case (and in the study of AECs of acts), the language will always be $\{ s \cdot   : s \in S \}$ where  $s \cdot$ is interpreted as multiplication by $s$ for each $s \in S$ and the substructure relation is the subact relation.

We will introduce AEC notions in the general set up, but for simplicity one could think that $\Kk$ is always  $\Kk_{\aact}$.

Let $\Kk$ be an AEC. We say that $f: M \to N$ is a \emph{$\Kk$-embedding} if $f: M \cong f[M]$ and $f[M] \lea N$. In the context of $\Kk_{\aact}$, $f$ is a  $\Kk_{\aact}$-embedding if and only if $f$ is a monomorphism of acts. For $A\subseteq M$, we write $f : M \xrightarrow[A]{} N$ if $f: M \to N$ is a $\Kk$-embedding and $f(a) = a$ for every $a \in A$ and we write $f: M \cong_A N$ if $f: M \xrightarrow[A]{} N$ and moreover $f$ is an isomorphism.

We will often write $|M|$ for the underlying set of the model, $\|M\|$ for its cardinality and $\Kk_\lambda$ for the models of $K$ of cardinality $\lambda$. An increasing chain  $\{ M_i : i < \alpha\}\subseteq \Kk$ (for $\alpha$ an ordinal) is a \emph{continuous chain} if $M_i =\bigcup_{j < i} M_j$  for every $i < \alpha$ limit ordinal.

We say that an AEC $\Kk$ has: \emph{amalgamation} if every span $M \lea N_1, N_2$ can be completed to commutative square of $\Kk$-embeddings,  \emph{disjoint amalgamation} if for every  span $M \lea N_1, N_2$ there are $f_1: N_1 \to N$ and $f_2: N_2 \to N$ $\Kk$-embeddings such that $f_1[N_1] \cap f_2[N_2] = f_1[M]$ and $f_1\upharpoonright M =   f_2\upharpoonright M$, \emph{joint embedding} if any two models can be $\Kk$-embedded into a third model, and \emph{no maximal models} if every model can be properly extended. 
\begin{lemma}\label{act-aec}
$\Kk_{\aact} = (S\text{-acts}, \leq)$ is an AEC with $\LS(\Kk_{\aact})=\operatorname{card}(S) + \aleph_0$. Moreover, $\Kk_{\aact}$ has disjoint amalgamation, joint embedding and no maximal models. 
\end{lemma}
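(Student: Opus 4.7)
The plan is to verify each clause separately, leveraging the fact that $S$-acts form a variety of unary algebras and that the paragraphs preceding the lemma already give explicit descriptions of pushouts and coproducts in $S\text{-}\Act$.

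First I would check the AEC axioms for $\Kk_{\aact}$ directly. Closure under isomorphism is immediate, and the coherence property is automatic because $\leq$ is literally the subact relation: if $M_0\subseteq M_1\leq M_2$ and $M_0\leq M_2$, then $M_0$ is already a subact of $M_1$. For the Tarski--Vaught chain axioms, since the language consists only of unary operations $s\cdot$, the ordinary set-theoretic union of an increasing chain $\{M_i : i<\alpha\}$ of subacts inherits a unique compatible $S$-action making each $M_i$ a subact of $\bigcup_{i<\alpha} M_i$, and any common subact extension $N$ of all $M_i$ contains the union as a subact. For the L\"owenheim--Skolem--Tarski bound, given $M$ and $A\subseteq |M|$ the subact generated by $A$ is $SA=\{sa : s\in S,\, a\in A\}$, of cardinality at most $\operatorname{card}(S)\cdot|A|+\aleph_0\leq |A|+\operatorname{card}(S)+\aleph_0$, yielding $\LS(\Kk_{\aact})=\operatorname{card}(S)+\aleph_0$.

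Next I would establish disjoint amalgamation. Given a span $N_1 \geq M \leq N_2$ of subact inclusions, I would form the pushout in $S\text{-}\Act$. The discussion preceding the lemma shows that the underlying set of this pushout is the pushout in $\Set$, i.e.\ $(N_1 \sqcup N_2)/\!\sim$ where the two copies of $M$ are identified, and that the induced maps $q_1,q_2$ are monomorphisms. A direct check on equivalence classes shows $q_1[N_1]\cap q_2[N_2]=q_1[M]=q_2[M]$, which is exactly disjoint amalgamation; ordinary amalgamation follows a fortiori.

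Finally, joint embedding is given by the coproduct $N_1\sqcup N_2$ with componentwise action, into which each $N_i$ embeds as a subact. For no maximal models, given $M$ I would adjoin a new element $\ast\notin M$ and put $s\cdot\ast=\ast$ for every $s\in S$; the associativity and identity axioms for an act are then satisfied trivially, and $M\cup\{\ast\}$ is a proper subact extension of $M$. None of these steps should present a genuine obstacle: the only mild subtlety is verifying that directed colimits computed in $\Set$ carry an $S$-act structure turning each link into a subact, but this is a brief book-keeping exercise because the operations are unary.
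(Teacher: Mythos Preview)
Your proof is correct and follows essentially the same approach as the paper: the paper simply declares the AEC axioms ``clear'' and then invokes pushouts (with monomorphisms stable under pushout) for disjoint amalgamation and coproducts for joint embedding and no maximal models, while you spell these out in more detail. Your fixed-point construction for no maximal models is just the coproduct of $M$ with the one-element act, so it is a concrete instance of the paper's argument rather than a genuinely different route.
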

\begin{proof}
It is straightforward to show  that $\Kk_{\aact}$ is an AEC with $\LS(\Kk_{\aact})=\operatorname{card}(S) + \aleph_0$. For example,   $\LS(\Kk_{\aact})=\operatorname{card}(S) + \aleph_0$, as given $A$ an S-act and $X \subseteq A$, we  have that $SX$ is a subact of $A$ with $\| SX \| \leq |X| + \operatorname{card}(S) +\aleph_0$. 

Disjoint amalgamation follows from the existence of pushouts and the fact that monomorphisms are stable under pushouts.  Joint embedding and no maximal models follow from the existence of coproducts (which are given by disjoint unions).\end{proof}

We introduce the notion of Galois types which were originally introduced by Shelah. These are a semantic generalization of first-order types.

\begin{defin} Let $\Kk$ be an AEC.
\begin{itemize}
\item The class of \emph{pre-types} $\Kk^{3}$ is the class of triples of the form $(\bar{b}, A,
N)$, where $N \in K$, $A \subseteq |N|$, and $\bar{b} \in N^{<\omega}$.
    \item For $(\bar{b_1}, A_1, N_1), (\bar{b_2}, A_2, N_2) \in \Kk^{3}$, we
say $(\bar{b_1}, A_1, N_1)E_{\text{at}}^{\Kk} (\bar{b_2}, A_2, N_2)$ if $A
:= A_1 = A_2$, and there exist $\Kk$-embeddings $f_\ell : N_\ell \xrightarrow[A]{} N$ for $\ell \in \{ 1, 2\}$ such that
$f_1 (\bar{b_1}) = f_2 (\bar{b_2})$ and $N \in K$. 
\item  Observe that $E_{\text{at}}^{\Kk}$ is a symmetric and
reflexive relation on $\Kk^3$. We let $E^{\Kk}$ be the transitive
closure of $E_{\text{at}}^{\Kk}$.
\item For $(\bar{b}, A, N) \in \Kk^{3}$, \emph{the Galois type of $\bar{b}$ over $A$ in $N$} is the  equivalence class of $(\bar{b}, A, N)$ modulo ${E^{\Kk}}$. We denote it by $\gtp_{\Kk} (\bar{b} / A;
N)$. Usually, $\Kk$ will be clear from the context and we will omit it.
\item For $M \in K$, $\gS_{\Kk}(M)= \{  \gtp_{\Kk}(b / M; N) : M
\leq_{\Kk} N\in K \text{ and } b \in N\} $. Usually, $\Kk$ will be clear from the context and we will omit it.
\end{itemize}
\end{defin} 

\begin{remark} Observe that for every AEC $\Kk$ and $M \in \Kk$ with $\| M \| \geq \LS(\Kk)$, we have that $|\gS_{\Kk}(M)| \leq 2^{\| M \|}$. This is the case as every Galois type contains a triple $(a, M, N)$ with $\| N\|= \|M\|$ and by counting the number of isomorphism types of such triples (see  \cite[2.26]{mt}). So in particular, $\gS_{\Kk}(M)$ is always a set.
\end{remark}

In the context of $\Kk_{\aact}$, we get a simpler characterization of Galois types. 


\begin{remark} Let $M$ be a structure, $X \subseteq M$ and $\bar{b} \in M^{<\omega}$. Recall that \emph{the quantifier-free type of $\bar{b}$ over $X$ in $M$} is the set of (first-order) quantifier-free formulas with parameters in $X$ satisfied by $\bar{b}$ in $M$. We denote it by  $\operatorname{qftp}(\bar{b}/X, M)$.
\end{remark}

\begin{fact}\label{b-types}
Let $X \subseteq A_1, A_2 \in \aact$ and $\bar{b}_1 \in A_1^{<\omega}$, $\bar{b}_2 \in A_2^{<\omega}$. The following are equivalent
\begin{enumerate}
\item  $\gtp(\bar{b}_1/X; A_1)= \gtp(\bar{b}_2/X; A_2)$,
\item $\operatorname{qftp}(\bar{b}_1/X, A_1) = \operatorname{qftp}(\bar{b}_1/X, A_2)$,
\item There is $f: S\bar{b}_1 \cup SX \cong_X S\bar{b}_2 \cup SX$ such that $f(\bar{b}_1)=\bar{b}_2$. 

\end{enumerate}
\end{fact}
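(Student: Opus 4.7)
The plan is to prove (1) $\Leftrightarrow$ (3) using amalgamation together with the concrete pushout description in $S$-$\Act$ from the preliminaries, and then (2) $\Leftrightarrow$ (3) by exploiting that the language of $\Kk_{\aact}$ consists only of unary operations, so quantifier-free formulas reduce to equations between terms of the form $s \cdot \tau$.

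For (1) $\Rightarrow$ (3), Lemma \ref{act-aec} gives amalgamation, so the atomic relation $E_{\text{at}}^{\Kk_{\aact}}$ is already transitive and coincides with $E^{\Kk_{\aact}}$; hence (1) produces $\Kk_{\aact}$-embeddings $f_\ell : A_\ell \xrightarrow[X]{} N$ with $f_1(\bar{b}_1) = f_2(\bar{b}_2)$. Restricting each $f_\ell$ to the subact $S\bar{b}_\ell \cup SX$ lands in the common subact $Sf_1(\bar{b}_1) \cup SX$ of $N$, so composing the restriction of $f_1$ with the inverse of the restriction of $f_2$ yields the desired isomorphism. Conversely, for (3) $\Rightarrow$ (1) one takes the pushout of $A_1$ and $A_2$ along $f$ over $S\bar{b}_1 \cup SX$. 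By the observation after Lemma \ref{act-aec} that pushouts in $S$-$\Act$ preserve monomorphisms, the legs of this pushout are $\Kk_{\aact}$-embeddings fixing $X$ that send $\bar{b}_1$ and $\bar{b}_2$ to the same tuple, directly witnessing $E_{\text{at}}^{\Kk_{\aact}}$.

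For (2) $\Leftrightarrow$ (3): in this language every term in variables $\bar{v}$ with parameters in $X$ has the form $s \cdot v_i$ or $s \cdot x$ with $x \in X$, and every atomic formula is an equation between two such terms. Thus $\operatorname{qftp}(\bar{b}/X, A)$ records exactly the set of equalities holding among the elements of $S\bar{b} \cup SX$ inside $A$, i.e., it is the atomic diagram of the subact generated by $X$ and $\bar{b}$, which in this unary variety is $S\bar{b} \cup SX$ itself. Two such diagrams agree exactly when the assignment $\bar{b}_1 \mapsto \bar{b}_2$ extends to a well-defined bijection $S\bar{b}_1 \cup SX \to S\bar{b}_2 \cup SX$ fixing $X$; since any such bijection commutes automatically with each $s \cdot$, this is precisely (3).

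The only delicate point is the identification in the (2) $\Leftrightarrow$ (3) step of a quantifier-free type with the atomic diagram of the generated subact; once this bookkeeping is in place the equivalences are routine, and I do not expect any of the steps to require ideas beyond the amalgamation already recorded in Lemma \ref{act-aec} and the elementary structure of the language of $S$-acts.
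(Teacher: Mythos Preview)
Your proposal is correct, and the core content --- especially the $(2) \Leftrightarrow (3)$ step --- matches the paper's argument: the paper also defines $f$ by $s *_{A_1} x \mapsto s *_{A_2} x$ and $s *_{A_1} b_{1,\ell} \mapsto s *_{A_2} b_{2,\ell}$ and checks via the quantifier-free type that this is a well-defined isomorphism, which is exactly your ``atomic diagram'' bookkeeping phrased concretely.

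The organization, however, differs. The paper argues the cycle $(1) \Rightarrow (2) \Rightarrow (3) \Rightarrow (1)$: $(1) \Rightarrow (2)$ is immediate since $\Kk$-embeddings preserve quantifier-free formulas, and for $(3) \Rightarrow (1)$ the paper simply observes that $(\bar{b}_1, X, S\bar{b}_1 \cup SX)\, E_{\text{at}}\, (\bar{b}_2, X, S\bar{b}_2 \cup SX)$ is witnessed by $f$ and the identity --- no pushout is needed. Your route instead proves $(1) \Leftrightarrow (3)$ and $(2) \Leftrightarrow (3)$ separately: for $(1) \Rightarrow (3)$ you invoke amalgamation to make $E_{\text{at}}$ transitive and then compose restrictions of the resulting embeddings, and for $(3) \Rightarrow (1)$ you build a pushout amalgam of $A_1$ and $A_2$. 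Both directions are fine, but the paper's $(3) \Rightarrow (1)$ is lighter (it never leaves the generated subacts), while your $(1) \Rightarrow (3)$ has the small advantage of not passing through syntax at all. One minor remark: the fact that pushouts preserve monomorphisms is recorded in Section~2.1 \emph{before} Lemma~\ref{act-aec}, not after it.
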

\begin{proof}
$(1) \Rightarrow (2)$ is clear and $(3) \Rightarrow (1)$ follows from the fact that $(\bar{b}_1, X,  S\bar{b}_1 \cup SX ) E_{at} (\bar{b}_2, X,  S\bar{b}_2 \cup SX)$. We sketch $(2) \Rightarrow (3)$.

Let $f: S\bar{b}_1 \cup SX \to S\bar{b}_2 \cup SX$ be given by:
 
\[ f(d)=\left\{\begin{array}{cl}s *_{A_2} x & \text{if } d = s *_{A_1} x \text{ for some } x \in X \text{ and }  s \in S \\ s *_{A_2} b_{2,\ell}  & \text{if } d = s *_{A_1} b_{1,\ell} \text{ for some } 1 \leq \ell \leq  n \text{ and } s \in S \end{array}\right.\]

Using $(2)$ it is straightforward to show that $f$ is as required. 
\end{proof}

\begin{remark}
The previous result holds for any universal class in the sense of Tarski \cite{tarski} and it is due to Will Boney \cite[3.7]{vaseyd}. In particular,  the result holds for every variety, in the sense of universal algebra, with embeddings. 
\end{remark}

\begin{defin} Let $\Kk$ be an AEC and $\lambda > \LS(\Kk)$. 
\begin{itemize}
\item $M $ is \emph{$\lambda$-saturated} if   for all $M_0 \lea M$ with $\LS(\Kk) \leq \|M_0\| < \lambda$, and all $p \in \gS(M_0)$, there is $m \in M$ such that $p = \gtp(m/M_0; M)$.
\item $M$ is \emph{saturated} if $M \in \Kk_\lambda$ and $M$ is $\lambda$-saturated.
\end{itemize}  
  \end{defin}
  
  \begin{fact}\label{sat-iso}
Let $\Kk$ be an AEC with joint embedding, amalgamation and no maximal models. If $M$ and $N$ are saturated models and $\| M \| = \| N \| > \LS(\Kk)$, then $M$ and $N$ are isomorphic. 
\end{fact}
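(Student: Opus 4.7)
The plan is to run a standard back-and-forth argument of length $\lambda = \|M\| = \|N\|$, producing a continuous chain of partial isomorphisms whose union is the desired isomorphism $M \cong N$. Fix enumerations $|M| = \{m_i : i < \lambda\}$ and $|N| = \{n_i : i < \lambda\}$. We will build $\lea$-increasing continuous chains $(M_i)_{i < \lambda}$ with $M_i \lea M$ and $(N_i)_{i < \lambda}$ with $N_i \lea N$, together with isomorphisms $f_i : M_i \to N_i$ such that $f_j \subseteq f_i$ whenever $j < i$, and with $\|M_i\| = \|N_i\| < \lambda$, arranging at stage $2j+1$ that $m_j \in M_{2j+1}$ and at stage $2j+2$ that $n_j \in N_{2j+2}$. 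The final isomorphism will then be $f = \bigcup_{i<\lambda} f_i : M \to N$.

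At limit stages we simply take unions, using that $\Kk$ is closed under directed colimits (so $M_i = \bigcup_{j<i} M_j \lea M$ and likewise for $N_i$), and the $f_i$ cohere by construction. The content of the construction lies in the successor step, which I describe in the \emph{forth} direction (the \emph{back} direction is symmetric). Suppose $f_i : M_i \cong N_i$ is already built with $\|M_i\| < \lambda$, and we wish to bring some element $m \in M$ into the domain. Consider the Galois type $p = \gtp_\Kk(m/M_i; M)$, and transport it across $f_i$ as follows: using amalgamation, glue $M$ and $N$ over $f_i$ to obtain some $P \in K$ with $\Kk$-embeddings $g : M \to P$ and $h : N \to P$ such that $h \circ f_i = g \rest M_i$. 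Then $\gtp_\Kk(g(m)/h[N_i]; P)$ pulls back via $h$ to a type $q \in \gS_\Kk(N_i)$. Since $\|N_i\| < \lambda$ and $N$ is $\lambda$-saturated, $q$ is realized by some $n \in N$. By the very definition of Galois type, the map $f_i \cup \{(m, n)\}$ extends to an isomorphism of $\Kk$-substructures containing $m$ and $n$ respectively; using the L\"{o}wenheim-Skolem-Tarski axiom, take $\Kk$-substructures $M_{i+1}\lea M$ and $N_{i+1}\lea N$ of size $\|M_i\| + \LS(\Kk)< \lambda$ containing the relevant elements, and use amalgamation once more (inside $M$ and inside $N$ separately, via saturation) to extend $f_i$ to an isomorphism $f_{i+1} : M_{i+1} \to N_{i+1}$.

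The main obstacle, and the only genuinely delicate point, is making sense of the phrase \emph{extend $f_i$ to an isomorphism on $\Kk$-substructures of size $<\lambda$}, since Galois types in an AEC need not be witnessed by an isomorphism of the generated substructures (as they would be in a first-order setting). The right way to handle this is to interleave the back-and-forth with successive applications of amalgamation inside $M$ and inside $N$ so that each $f_{i+1}$ is obtained as the restriction to $M_{i+1}$ of a $\Kk$-embedding $M \to N$ realizing the prescribed type; since both $M$ and $N$ are saturated and of the same size, one can arrange at each step that the image lands inside $N$ and contains any prescribed element. The bookkeeping that the chain remains of size strictly below $\lambda$ at each stage is immediate from $\|M_i\| + \LS(\Kk) < \lambda$, so taking $f = \bigcup_{i<\lambda} f_i$ yields an isomorphism $M \to N$ that is surjective (by the even stages) and defined on all of $M$ (by the odd stages).
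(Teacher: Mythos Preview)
The paper does not prove this statement: it is recorded as a \emph{Fact}, i.e., a standard result from the AEC literature (see, e.g., Shelah \cite{sh88} or Baldwin \cite[\S 8]{baldwinbook09}), and is quoted without proof. So there is no ``paper's own proof'' to compare against; the question is only whether your sketch stands on its own.

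Your overall plan --- a back-and-forth of length $\lambda$ along $\lea$-chains $M_i \lea M$, $N_i \lea N$ with compatible isomorphisms $f_i : M_i \cong N_i$ --- is the standard one and is correct in outline. The gap is in the successor step. You correctly notice that equality of Galois types does \emph{not} give you that $f_i \cup \{(m,n)\}$ extends to an isomorphism of $\Kk$-substructures, but your proposed fix does not close the gap. Picking $M_{i+1} \lea M$ and $N_{i+1} \lea N$ by L\"owenheim--Skolem gives no reason for $M_{i+1}$ and $N_{i+1}$ to be isomorphic over $f_i$; and the sentence ``$f_{i+1}$ is obtained as the restriction to $M_{i+1}$ of a $\Kk$-embedding $M \to N$'' is circular, since an embedding $M \to N$ is exactly what the whole construction is meant to produce.

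What is actually needed is the intermediate lemma that in an AEC with amalgamation, $\lambda$-saturation implies $\lambda$-\emph{model-homogeneity}: whenever $N_0 \lea N_1$ with $\|N_1\| < \lambda$ and $g : N_0 \to N$ is a $\Kk$-embedding, then $g$ extends to a $\Kk$-embedding $N_1 \to N$. This is itself proved by a (one-sided) transfinite recursion realizing one Galois $1$-type at a time, interleaved with L\"owenheim--Skolem to stay on $\lea$-substructures. Once you have model-homogeneity, the successor step of your outer back-and-forth is immediate: choose $M_{i+1} \lea M$ of size $<\lambda$ containing $M_i \cup \{m\}$ by L\"owenheim--Skolem, and use model-homogeneity of $N$ to extend $f_i$ to a $\Kk$-embedding $f_{i+1} : M_{i+1} \to N$; set $N_{i+1} = f_{i+1}[M_{i+1}]$. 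State and invoke that lemma explicitly and your argument is complete.
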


One of the key model-theoretic dividing lines is stability. We will study stability in detail in Section 3.

\begin{defin}\label{def-sta} Let $\Kk$ be an AEC.
\begin{itemize}
\item Assume $\lambda \geq \LS(\Kk)$. An AEC $\Kk$ is \emph{stable in $\lambda$}  if for every $M \in
\Kk_\lambda$, $| \gS_{\Kk}(M) | \leq \lambda$.  
\item An AEC $\Kk$ is \emph{stable} if there is a $\lambda \geq \LS(\Kk)$ such that $\Kk$ is stable in $\lambda$.
\end{itemize}
\end{defin}

\begin{defin}  Assume $\lambda \geq \LS(\Kk)$.
\emph{$M$ is universal over $N$} if and only if $N \lea M$, $\| M  \| = \| N \| = \lambda$
 and for any $N^* \in \Kk_{\lambda}$ such that
$N \lea N^*$, there is a $\Kk$-embedding $f: N^* \xrightarrow[N]{} M$.
\end{defin} 

The existence of universal extensions follows from stability. 

\begin{fact}[{\cite[2.9]{tamenessone}}]\label{uni-exis}
Let $\Kk$ be an AEC with  amalgamation and no maximal models. If $\Kk$ is stable in $\lambda$, then for every  $M
\in \Kk_\lambda$, there is $N \in \Kk_\lambda$ such that $N$ is universal over $M$.
\end{fact}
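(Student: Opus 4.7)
The plan is to use stability in $\lambda$ to bound the cardinality of $\gS(M)$ and then build $N$ as the union of a short continuous chain in which every Galois type over $M$ is realized.

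First, by Definition \ref{def-sta}, $|\gS(M)| \leq \lambda$, so fix an enumeration $\gS(M) = \{p_i : i < \lambda\}$. Using amalgamation (to place a realization of $p_i$ into a common extension of $M$), no maximal models (to keep successor extensions proper whenever needed), and the L\"owenheim--Skolem--Tarski axiom (to bring every stage back down to size $\lambda$), construct a continuous $\lea$-increasing chain $\langle M_i : i \leq \lambda \rangle$ with $M_0 = M$, each $M_i \in \Kk_\lambda$, and some $a_i \in M_{i+1}$ realizing $p_i$ over $M$. Set $N := M_\lambda$; by the Tarski--Vaught chain axiom and $\lambda \cdot \lambda = \lambda$, one has $N \in \Kk_\lambda$, and by construction every Galois type in $\gS(M)$ is realized in $N$.

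Next, verify universality. Given $M \lea N^* \in \Kk_\lambda$, apply the L\"owenheim--Skolem--Tarski axiom to decompose $N^* = \bigcup_{\alpha < \lambda} N^*_\alpha$ as a continuous $\lea$-chain with $N^*_0 = M$ and $\|N^*_\alpha\| < \lambda$, and construct commuting $\Kk$-embeddings $f_\alpha : N^*_\alpha \to N$ over $M$ by transfinite recursion: take unions at limit stages, and at successor stages extend $f_\alpha$ by invoking amalgamation inside $N$ and the realizations produced in the construction of $N$.

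The main obstacle is the successor step of this last recursion: extending $f_\alpha$ requires realizing, inside $N$, the Galois type of the new portion of $N^*_{\alpha+1}$ over $f_\alpha[N^*_\alpha]$, whereas $N$ visibly realizes only types over $M$ itself. The standard remedy is to strengthen the initial enumeration so as to cover Galois types over every $\lea$-submodel of $N$ of size $<\lambda$ that can arise during the back-and-forth; stability combined with cardinal arithmetic still caps the total number of such types at $\lambda$, so an interleaved bookkeeping keeps the construction chain of length $\lambda$ and leaves the final model $N$ in $\Kk_\lambda$.
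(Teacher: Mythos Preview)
The paper does not prove this statement; it is cited without proof as a fact from \cite{tamenessone}. I therefore assess your sketch on its own merits.

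The outline is on the right track, but the final paragraph contains a real gap. Your claim that ``stability combined with cardinal arithmetic still caps the total number of such types at $\lambda$'' is unjustified: a model of size $\lambda$ can have $2^\lambda$ many $\lea$-submodels of size $<\lambda$, so even under $\lambda$-stability the total number of types over all of them may far exceed $\lambda$. There is also a circularity in speaking of ``$\lea$-submodels of $N$'' while $N$ is still being built.

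The correct fix is not to anticipate all small submodels but to iterate: build $\langle M_i : i < \lambda\rangle$ so that $M_{i+1}$ realizes every type in $\gS(M_i)$, i.e.\ over the \emph{current} stage, not merely over $M_0 = M$. Stability gives $|\gS(M_i)| \leq \lambda$ at every step, so each $M_i$ stays in $\Kk_\lambda$. Then in the back-and-forth the image $f_\alpha[N^*_\alpha]$ sits inside some $M_i$; the relevant type over $f_\alpha[N^*_\alpha]$ extends (by amalgamation) to a type over $M_i$, and that extension is realized in $M_{i+1}$, which supplies the successor step. A minor side issue: writing $N^*$ as a continuous chain of submodels of size strictly less than $\lambda$ can fail when $\lambda = \LS(\Kk)$; take the pieces of size $\leq\lambda$ instead.
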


 We recall limit models which were originally introduced in \cite{kosh}.  They will be studied in detail in Section 4 in $\Kk_{\aact}$. We will show that they are very interesting algebraic objects (see Section 4.2 for some applications to algebra).  

\begin{defin}\label{def-limit}
Let $\lambda  \geq \LS(\Kk)$  and $\alpha < \lambda^+$ be a limit ordinal.  \emph{$M$ is a $(\lambda,
\alpha)$-limit model over} $N$ if and only if there is $\{ M_i : i <
\alpha\}\subseteq \Kk_\lambda$ an increasing continuous chain such
that:
\begin{itemize}
\item $M_0 =N$ and  $M= \bigcup_{i < \alpha} M_i$, and
\item $M_{i+1}$ is universal over $M_i$ for each $i <
\alpha$.
\end{itemize}

$M$ is a \emph{$(\lambda, \alpha)$-limit model} if there is $N \in
\Kk_\lambda$ such that $M$ is a $(\lambda, \alpha)$-limit model over
$N$. $M$ is a \emph{$\lambda$-limit model} if there is a limit ordinal
$\alpha < \lambda^+$ such that $M$  is a $(\lambda,
\alpha)$-limit model.  $M$ is a \emph{limit model} if there is an infinite cardinal $\lambda \geq \LS(\Kk)$ such that $M$ is a $\lambda$-limit model.
\end{defin}

\begin{remark}\label{e-limit}  Assume $\Kk$ is an AEC with joint embedding, amalgamation and no maximal models, and $\Kk$ is stable in $\lambda$. It follows from Fact \ref{uni-exis} that for every $M \in \Kk_\lambda$ and $\alpha < \lambda^+$ limit ordinal there is $N$ a $(\lambda, \alpha)$-limit model over $M$. Moreover, a back-and-forth argument shows that for any $\alpha < \lambda^+$ limit ordinal there is a unique 
$(\lambda, \alpha)$-limit model.
\end{remark}

We introduce superstability using limit models. Superstability is also an important dividing line in model theory. We will study superstability in detail in Section 4.

\begin{defin}\label{def-sup}
We say that $\Kk$ is \emph{superstable} if and only if there is a cardinal $\mu \geq \LS(\Kk)$ such that $\Kk$ has a unique $\lambda$-limit model (up to isomorphisms) for every $\lambda \geq \mu$. 
\end{defin}

\begin{remark}\label{limit1}
Assume $\Kk$ is an AEC with joint embedding, amalgamation and no maximal models. It is easy to show that if $\Kk$ has a $\lambda$-limit model then $\Kk$ is stable in $\lambda$. Therefore, if $\Kk$ is superstable, then $\Kk$ is stable. 
\end{remark}

\begin{remark}\label{tail} 
For the reader familiar with  superstability in the context of first-order model theory, Definition \ref{def-sup} is equivalent to being stable on a tail of cardinals for AECs with amalgamation, joint embedding, no maximal models and tameness  \cite[1.3]{grva}, \cite{vaseyt}. In particular, the equivalence holds for  first-order theories and $\Kk_{\aact}$ (see Remark \ref{lim=ss}). 

\end{remark}

\section{Stability spectrum}
 
 The main result of this section is a characterization of the stability cardinals of $\Kk_{\aact}$. 

\begin{theorem}\label{sta-2}
Let $\lambda \geq 2^{\operatorname{card}(S) + \aleph_0}$.  $ (S\text{-acts}, \leq)$ is stable in $\lambda$ if and only if  $\lambda^{<\gamma(S)} = \lambda$.
\end{theorem}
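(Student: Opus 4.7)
The plan is to use Fact \ref{b-types} to identify each Galois 1-type over $M$ with a triple $(\sim_b, I_b, \phi_b)$, where $\sim_b$ is the left congruence on $S$ defined by $s \sim_b t \Longleftrightarrow sb = tb$, $I_b = \{s \in S : sb \in |M|\}$ is a $\sim_b$-saturated left ideal of $S$, and $\phi_b : I_b/{\sim_b} \hookrightarrow M$ is the injective $S$-homomorphism $[s] \mapsto sb$. Conversely, every such triple reassembles into a type via the disjoint-amalgamation pushout of Lemma \ref{act-aec}, so both directions of the theorem become cardinality estimates for triples.

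For the $(\Leftarrow)$ direction I would count triples. There are at most $2^{\operatorname{card}(S)+\aleph_0}$ left congruences and at most $2^{\operatorname{card}(S)+\aleph_0}$ left ideals of $S$; for $(\sim, I)$ fixed, the embedding $\phi$ is determined by its values on a generating set of $I/{\sim}$ as a left $S$-act, which descends from a generating set of $I$ of cardinality $<\gamma(S)$. Hence $|\gS(M)| \leq 2^{\operatorname{card}(S)+\aleph_0} \cdot \lambda^{<\gamma(S)} \leq \lambda$, using $\lambda \geq 2^{\operatorname{card}(S)+\aleph_0}$ and $\lambda^{<\gamma(S)} \leq \lambda^{<\gamma_r(S)} = \lambda$ (the latter because $\gamma(S) \leq \gamma_r(S)$).

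For the $(\Rightarrow)$ direction, assume $\lambda^{<\gamma_r(S)} > \lambda$. A short cardinal-arithmetic check --- namely $\lambda^{\gamma(S)} \leq (\lambda^{<\gamma(S)})^{\cof \gamma(S)}$ when $\gamma(S)$ is singular, which would force $\lambda^{\gamma(S)} \leq \lambda$ were $\lambda^{<\gamma(S)} = \lambda$ --- yields an infinite regular $\kappa < \gamma(S)$ with $\lambda^\kappa > \lambda$. Minimality in Definition \ref{def-gamma} supplies a left ideal $I \subseteq S$ not generated by fewer than $\kappa$ elements. After extracting a family $\{s_\alpha : \alpha<\kappa\} \subseteq I$ with pairwise disjoint principal ideals $Ss_\alpha$ (see below), let $M = \bigsqcup_{i<\lambda} S$ be the free left $S$-act on $\lambda$ generators $\{e_i : i < \lambda\}$. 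For each $f : \kappa \to \lambda$, the assignment $s_\alpha \mapsto s_\alpha e_{f(\alpha)}$ extends, by disjointness, to an $S$-homomorphism $\bigsqcup_\alpha Ss_\alpha \to M$, and the pushout along $\bigsqcup_\alpha Ss_\alpha \hookrightarrow S$ produces an amalgam $N_f \supseteq M$ containing a fresh $b_f$ with $s_\alpha b_f = s_\alpha e_{f(\alpha)}$. By Fact \ref{b-types}(3) and the coproduct structure of $M$, distinct $f, f'$ yield distinct types $p_f = \gtp(b_f/M;N_f)$, so $|\gS(M)| \geq \lambda^\kappa > \lambda$.

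The main obstacle is the extraction of generators with pairwise disjoint principal ideals. Decomposing $I = \bigsqcup_C C$ into connected components as a left $S$-act, the regularity of $\kappa$ together with additivity of the minimum-generator count across coproducts forces either $\geq \kappa$ many components (in which case one representative per component does the job) or a single component with $\geq \kappa$ generators. The second case is the delicate one: within a connected component any two principal sub-ideals may intersect, so locating $\kappa$ elements with disjoint principal ideals requires a finer combinatorial analysis internal to the component, paralleling the construction behind the first-order result referenced in Remark \ref{first-ext}.
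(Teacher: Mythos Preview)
Your $(\Leftarrow)$ direction is correct and in fact more elementary than the paper's. The paper proves this half (Lemma~\ref{sta}) by setting up the stable independence relation of Definition~\ref{indp}, showing every type does not fork over a subact generated by $<\gamma_r(S)$ elements (Corollary~\ref{nf-gtp}), and then running the usual uniqueness-plus-pigeonhole argument. Your direct parametrisation of $\gS(M)$ by triples $(\sim_b,I_b,\phi_b)$ bypasses the independence machinery entirely and gives the same bound in one stroke; the only place the two arguments touch is that both ultimately use that every left ideal is $(<\gamma(S))$-generated.

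Your $(\Rightarrow)$ direction, however, has a genuine gap, and the obstacle you flag is fatal to the approach as written. The extraction of $\kappa$ generators with \emph{pairwise disjoint} principal left ideals is impossible in general: take $S=\kappa\cup\{0,1\}$ with $1$ the identity, $0$ a two-sided zero, and $\alpha\beta=0$ for all $\alpha,\beta\in\kappa$. Then every principal left ideal $S\alpha=\{\alpha,0\}$ contains $0$, so no two are disjoint, yet the ideal $\kappa\cup\{0\}$ is strictly $\kappa$-generated, so $\gamma(S)=\kappa^+$. Your connected-component reduction lands squarely in this case and cannot proceed. Without disjointness the map $\bigsqcup_\alpha Ss_\alpha\to S$ is not a monomorphism, so the pushout need not embed $M$, and the construction of $N_f$ collapses. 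A repair would require abandoning the free act $M=\bigsqcup_\lambda S$ and instead tailoring $M$ to the relations among the $s_\alpha$, but you have not done this and it is not clear it can be made uniform.

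The paper takes a completely different route for this half (Corollary~\ref{o-dir}): it computes the splitting local-character invariant $\kappa((\Kk_{\aact})_\lambda,\leq^u)=\gamma_r(S)$ in Lemma~\ref{g=k} by a chain construction that needs only $a_\alpha\notin S\{a_\beta:\beta<\alpha\}$ (no disjointness), and then invokes the abstract machinery of \cite{vaseyt} to convert this into the cardinal-arithmetic conclusion. So the paper trades your hoped-for combinatorial lemma for a black-box citation; your approach, if the gap could be closed, would be self-contained, but as it stands it is incomplete.
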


\subsection{Stability} Independence relations on categories were introduced in \cite{lrv1}. These extend Shelah's notion of non-forking which in turn extends linear independence. In this subsection we study a specific independence relation on $\Kk_{\aact}$ while in parallel quickly introducing stable independence relation in the context of AECs. All the abstract notions we introduce in this subsection were first introduced in \cite{lrv1}. Furthermore we will always work in $\Kk_{\aact}$  unless specified otherwise. 


\begin{defin}[{\cite[3.4]{lrv1}}] An \emph{independence relation} on an AEC $\Kk$ is a collection $\dnf$ of commutative squares of $\Kk$-embeddings such that for any commutative diagram:

\[
  \xymatrix@=3pc{
    & & E \\
    B \ar[r]^{g_1}\ar@/^/[rru]^{h_1} & D \ar[ru]^t  & \\
    A \ar [u]^{f_1} \ar[r]_{f_2} & C \ar[u]_{g_2} \ar@/_/[ruu]_{h_2} &
  }
\]

we have that $(f_1, f_2, g_1, g_2) \in \dnf$ if and only if $(f_1, f_2, h_1, h_2) \in \dnf$.

\end{defin} 

We study the following relation on $\Kk_{\aact}$.  
\begin{defin}\label{indp} 
We say that $(f_1, f_2, g_1, g_2) \in  \dnfs$ if and only if all the maps are S-monomorphisms and the following diagram

\[
 \xymatrix{\ar @{} [dr]  A_1 \ar[r]^{g_1}  & B \\
A_0 \ar[u]^{f_1} \ar[r]^{f_2} &  A_2  \ar[u]_{g_2} 
 }
\]
is a pullback. This means that $g_1[A_1] \cap g_2[A_2] = g_1 \circ f_1 [A_0] (=g_2 \circ f_2 [A_0])$.
\end{defin}

\begin{remark}
It is straightforward to show that $\dnfs$ is an independence relation on $\Kk_{\aact}$. 
\end{remark}

\begin{remark}
Notice that we use the symbol $\dnf$ for an arbitary independence relation on an AEC $\Kk$ and the symbol $\dnfs$ for the independence relation on $\Kk_{\aact}$ defined on Definition \ref{indp}.
\end{remark}

Given $\Kk$ an AEC. An independence relation $\dnf$ on $\Kk$ is \emph{weakly stable} if it satisfies symmetry \cite[3.9]{lrv1}, existence \cite[3.10]{lrv1}, uniqueness \cite[3.13]{lrv1} and transitivity \cite[3.13]{lrv1}.

\begin{defin}[{\cite[3.24, 8.14]{lrv1}}]\label{def-ind} Let $\Kk$ be an AEC. We say that $\dnf$ is a \emph{stable independence relation}  on $\Kk$ if $\dnf$ is weakly stable and satisfies the witness property \cite[8.7]{lrv1} and local character \cite[8.6]{lrv1}.
\end{defin}

\begin{remark}\label{e-squares} The category $S$-$\Act$ has \textit{effective unions} in the sense of Barr
\cite{B}. This means that whenever we have a pullback of $S$-monomorphisms
$$
\xymatrix@=3pc{
A_1 \ar[r]^{g_1} & A_3 \\
A_0 \ar [u]^{f_1} \ar [r]_{f_2} &
A_2 \ar[u]_{g_2}
}
$$
and the pushout
$$
\xymatrix@=3pc{
A_1 \ar[r]^{q_1} & P \\
A_0 \ar [u]^{f_1} \ar [r]_{f_2} &
A_2 \ar[u]_{q_2}
}
$$
then the induced morphism $k:P\to A_3$ is an $S$-monomorphism. This immediately
follows from the fact that the category $\Set$ has this property and the forgetful functor $U:S$-$\Act\to\Set$ preserves limits and colimits. It is also easy to check that the induced morphism is a monomorphism using an explicit construction of the pushout in $S$-$\Act$ (see for example \cite[\S II.2.26]{kkm}).

It is worth pointing out that this result will only be used in Fact \ref{ind} and in the Appendix.
\end{remark}


\begin{fact}\label{ind}
Let $\dnfs$ be as in Definition \ref{indp}. $\dnfs$ is a stable independence relation on $\Kk_{\aact}$.
\end{fact}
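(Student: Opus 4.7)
The plan is to verify, in order, the six conditions making up a stable independence relation per \cite[3.9, 3.10, 3.13, 8.6, 8.7]{lrv1}: symmetry, existence, uniqueness, transitivity, the witness property, and local character. The central categorical input will be Remark \ref{e-squares}: since $S$-$\Act$ has effective unions, every pushout square of monomorphisms is automatically a pullback square.

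Symmetry is immediate because the condition $g_1[A_1] \cap g_2[A_2] = g_1 f_1[A_0]$ is symmetric in the two legs. For existence, given a span of $S$-monomorphisms $A_1 \leftarrow A_0 \rightarrow A_2$, I would form the pushout in $S$-$\Act$; the discussion after Lemma \ref{act-aec} shows that the pushout legs are monomorphisms, and effective unions turn the resulting square into a pullback, producing an independent amalgam. Transitivity is the standard pullback pasting lemma applied to two horizontally adjacent independent squares. For uniqueness, both independent amalgams of a fixed span factor through the pushout by its universal property, and those factorisations are monomorphisms (again by effective unions), so Fact \ref{b-types} implies that the Galois types realised in the two amalgams coincide. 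The witness property is routine: a failure $g_1(a_1) = g_2(a_2) \notin g_1 f_1[A_0]$ is already visible inside the finitely generated subact generated by $a_1$ and $a_2$, because the pullback condition is a statement about individual elements.

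The step I expect to be the main obstacle is local character. The idea is to exhibit, for any $M \leq N$ in $\Kk_{\act}$ and any $\bar b = (b_1, \dots, b_n) \in N^{<\omega}$, a subact $M_0 \leq M$ of size at most $\LS(\Kk_{\act})$ over which the pullback condition holds. I would set
\[
M_0 := \{ s b_j : s \in S,\ 1 \leq j \leq n,\ s b_j \in M \}.
\]
The first check is that $M_0$ is closed under the $S$-action: if $s b_j \in M$ and $t \in S$, then $(ts) b_j = t(s b_j) \in M$ because $M$ is a subact of $N$, so $M_0 \leq M$ and $\| M_0 \| \leq \operatorname{card}(S) + \aleph_0 = \LS(\Kk_{\act})$. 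By construction $M \cap (M_0 \cup S\bar b) = M_0$, so the square with corners $M_0$, $M_0 \cup S\bar b$, $M$, $M \cup S\bar b$ is a pullback. For any increasing continuous chain $\{M_i : i < \delta\}$ of cofinality greater than $\LS(\Kk_{\act})$ whose union sits inside $N$, one can then pick $i < \delta$ with $M_0 \subseteq M_i$, and pullback pasting gives that the type of $\bar b$ over the union does not fork over $M_i$. This yields $(\LS(\Kk_{\act}) + \aleph_0)^+$-local character and completes the verification.
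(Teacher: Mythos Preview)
Your verification is correct and gives a genuinely different presentation from the paper's. The paper simply invokes the general black box \cite[5.1]{lrv1}, which says that in any coregular locally presentable category with effective unions the class of pullback squares of regular monomorphisms is a stable independence relation; Remark \ref{e-squares} and the coregularity noted in Section 2.1 supply the hypotheses. Your approach instead unpacks that theorem axiom by axiom in the concrete category $S$-$\Act$, which is exactly the ``short proof using an explicit construction of the pushout'' alluded to in the remark immediately following Fact \ref{ind}. Your route is longer but self-contained; the paper's is a one-line appeal to an external result.

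One imprecision is worth flagging. In your existence argument you say that ``effective unions turn the resulting \textup{[pushout]} square into a pullback''. Effective unions, as stated in Remark \ref{e-squares}, runs in the opposite direction: it starts from a pullback square of monomorphisms and concludes that the induced comparison map from the pushout into the ambient object is a monomorphism. That is precisely what you need for \emph{uniqueness}, and you use it correctly there. For existence, what you actually need is that in $S$-$\Act$ the pushout of a span of monomorphisms is already a pullback; this is immediate from the explicit description of pushouts in $\Set$ (lifted along the forgetful functor), and is exactly the content of disjoint amalgamation in Lemma \ref{act-aec}. Citing that lemma, or the explicit pushout description in Section 2.1, would make the step airtight. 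The mathematics is fine; only the attribution should be adjusted.
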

\begin{proof} The  result follows from Remark \ref{e-squares}, the fact that $\aact$ is coregular and  \cite[5.1]{lrv1}. 
\end{proof}

\begin{remark}
Fact \ref{ind} also has a short proof using an explicit construction of the pushout in $S$-$\Act$. 
\end{remark}

We now proceed to do a detailed analysis of the  independence relation introduced in Definition \ref{indp}.

\begin{nota} Let $\Kk$ be an AEC with an independence relation $\dnf$.

We write $M_1 \dnf^{M_3}_{M_0} M_2$ if $M_0 \lea M_1, M_2 \lea M_3$ and $(i_{0,1}, i_{0,2},i_{1,3},i_{2,3}) \in \dnf$ where $i_{\ell, m}$ is the inclusion map for every $\ell, m$. \end{nota}

On AECs, the independence relation can be extended to sets.

\begin{defin}[{\cite[8.2]{lrv1}}] Let $\Kk$ be an AEC with an independence relation $\dnf$. 
\emph{$X$ is (bar-)free from $Y$ over $N_0$ in $N_3$}, denoted by $\dnfb{N_0}{X}{Y}{N_3}$, if $N_0 \lea N_3$, $X \cup Y \subseteq |N_3|$ and there are $M_1, M_2, M_3 \in \Kk$ such that $X \subseteq |M_1|$, $Y \subseteq |M_2|$, $N_3 \lea M_3$ and $M_1 \dnf^{M_3}_{N_0} M_2$.

\end{defin}

We characterize the previous notion in $\Kk_{\aact}$.

\begin{lemma}\label{bar-nf} Let $A,B$ be S-acts, $A  \leq B$ and $X, Y \subseteq B$. Then  $\dnfbs{A}{X}{Y}{B}$ if and only if $SX \cap SY \subseteq A$. 
\end{lemma}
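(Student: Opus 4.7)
The plan is to prove the lemma by directly unpacking the definition of bar-freeness in terms of the pullback characterization of $\dnf$ given in Definition \ref{indp}, and then reducing to a straightforward set-theoretic identity.

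For the forward direction, I would assume $\dnfb{A}{X}{Y}{B}$ and let $M_1, M_2, M_3$ be the witnesses, so that $X \subseteq |M_1|$, $Y \subseteq |M_2|$, $B \leq M_3$, and the inclusion square $A \leq M_1, M_2 \leq M_3$ is a pullback. By Definition \ref{indp}, this pullback condition in $S$-$\Act$ is equivalent to $M_1 \cap M_2 = A$ computed inside $M_3$. Since $M_1$ is a subact of $M_3$ containing $X$, it must also contain $SX$, and analogously $SY \subseteq M_2$. Hence $SX \cap SY \subseteq M_1 \cap M_2 = A$, which is what we wanted.

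For the backward direction, I would assume $SX \cap SY \subseteq A$ and produce explicit witnesses by setting $M_1 = A \cup SX$, $M_2 = A \cup SY$ and $M_3 = B$. The observation from Section 2.1 that $S(A \cup X) = A \cup SX$ whenever $A$ is a subact immediately gives that $M_1$ and $M_2$ are subacts of $B$, and by construction $A \leq M_1, M_2 \leq M_3 = B$ with $X \subseteq M_1$ and $Y \subseteq M_2$. The remaining task is to verify the pullback condition $M_1 \cap M_2 = A$. This reduces to the set-theoretic identity
\[
(A \cup SX) \cap (A \cup SY) = A \cup (SX \cap SY),
\]
and combining this with the hypothesis $SX \cap SY \subseteq A$ yields $M_1 \cap M_2 = A$, completing the verification.

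There is no real obstacle here: once the pullback in Definition \ref{indp} is translated into the concrete intersection condition (which is justified by the fact that the forgetful functor $S\text{-}\Act \to \Set$ preserves limits, as noted earlier in Section 2.1), the lemma becomes a one-line computation in each direction. The only mild point to keep in mind is to use the closure property $S(A \cup X) = A \cup SX$ so that $M_1$ and $M_2$ are genuine subacts, not merely subsets.
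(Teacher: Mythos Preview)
Your proof is correct and is essentially the same as the paper's: the paper declares the forward direction ``clear'' and for the backward direction takes exactly your witnesses $L_1 = SX \cup A$, $L_2 = SY \cup A$, $L_3 = B$, noting that $SX \cap SY \subseteq A$ forces $L_1 \cap L_2 \subseteq A$. Your version simply spells out both directions with a bit more detail, including the distributive identity $(A \cup SX) \cap (A \cup SY) = A \cup (SX \cap SY)$ that the paper leaves implicit.
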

\begin{proof} The forward direction is clear so we prove the backward direction.  Assume $SX \cap SY \subseteq A$. Let $L_1 = SX \cup A$,  $L_2 = SY \cup A$ and $L_3 = B$. It is clear that $L_1, L_2 \leq  L_3$ are such that $X \subseteq |L_1|$, $Y \subseteq |L_2|$, $B \leq L_3$ and since $SX \cap SY \subseteq A$  we have that $L_1 \cap L_2 \subseteq A$. Hence $L_1 \dnfs^{L_3}_{A} L_2$. Therefore, $\dnfbs{A}{X}{Y}{B}$.
\end{proof}

We extend the independence relation to Galois types.

\begin{defin} Let $\Kk$ be an AEC with an independence relation $\dnf$. 

Let $\bar{a} \in N^{< \omega}$, $X \subseteq |N|$ and $M \lea N$.
\emph{$\gtp(\bar{a}/X, N)$ does not $\dnf$-fork over $M$}
 if $|M| \subseteq X$ and $ \dnfb{M}{ran(\bar{a}) }{X}{N}$ where $ran(\bar{a})$ denotes the range of $\bar{a}$. 
\end{defin}

\begin{remark}
It is straightforward to show that non-forking is well-defined for types, i.e., if $(\bar{a}_1, X, N_1) E_{\text{at}} (\bar{a}_2, X, N_2)$, $|M| \subseteq X$ and $ \dnfb{M}{ran(\bar{a}_1) }{X}{N_1}$ then $ \dnfb{M}{ran(\bar{a}_2) }{X}{N_2}$.
\end{remark}

The following result holds for any stable independence relation \cite[8.5]{lrv1}, but we provide the proof in the setting of this paper as the argument is significantly simpler than in the general case and it helps to elucidate the setting of the paper. 

\begin{fact}\label{uniq}(Uniqueness) Let $C$ be an S-act. Let $p, q \in \gS_{\Kk_{\aact}}(C)$ with $B \leq C$. If $p\rest B = q\rest B$ and $p, q$ do not $\dnfs$-fork over $B$, then $p=q$. 
\end{fact}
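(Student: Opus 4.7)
The plan is to realize both types and explicitly build an $S$-isomorphism between the two finitely generated subacts over $C$ together with the realizers. Write $p=\gtp(\bar{a}_1/C;N_1)$ and $q=\gtp(\bar{a}_2/C;N_2)$. By Fact \ref{b-types} applied to $p\rest B = q\rest B$, there is an $S$-isomorphism $f: S\bar{a}_1 \cup SB \cong_B S\bar{a}_2 \cup SB$ with $f(\bar{a}_1)=\bar{a}_2$ and $f\rest B = \id$. By Lemma \ref{bar-nf} applied to the assumption that neither type forks over $B$, we have $S\bar{a}_i \cap SC \subseteq B$ for $i\in\{1,2\}$. The goal is to promote $f$ to an $S$-isomorphism $g: S\bar{a}_1 \cup SC \cong_C S\bar{a}_2 \cup SC$ sending $\bar{a}_1$ to $\bar{a}_2$; then Fact \ref{b-types}\,(3)$\Rightarrow$(1) immediately gives $p=q$.

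I define $g$ piecewise: $g$ is the identity on $SC$ and agrees with $f$ on $S\bar{a}_1$, so $g(s\cdot \bar{a}_{1,\ell})=s\cdot \bar{a}_{2,\ell}$. The main technical content is checking that $g$ is well-defined on the overlap $S\bar{a}_1 \cap SC$. Any $d$ in this intersection lies in $B$ by non-forking of $p$; since $B$ is a subact we have $SB=B$, and since $f$ is an $S$-homomorphism fixing $B$, $f(d)=d$. Thus $s\cdot \bar{a}_{2,\ell}=f(s\cdot \bar{a}_{1,\ell})=f(d)=d$, so the two defining clauses coincide on $S\bar{a}_1\cap SC$. That $g$ is an $S$-homomorphism is then automatic from the two defining clauses being $S$-homomorphisms on their respective domains.

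For bijectivity, surjectivity is immediate from the construction. Injectivity splits into cases; the only nontrivial one is $d_1\in S\bar{a}_1$ and $d_2\in SC$ with $g(d_1)=g(d_2)$. Writing $d_1=s\cdot \bar{a}_{1,\ell}$, we obtain $s\cdot \bar{a}_{2,\ell}=d_2\in SC$, so $s\cdot \bar{a}_{2,\ell}\in S\bar{a}_2\cap SC \subseteq B$ by non-forking of $q$. Applying $f^{-1}$ (which fixes $B$) yields $d_1=s\cdot \bar{a}_{1,\ell}=f^{-1}(s\cdot \bar{a}_{2,\ell})=s\cdot \bar{a}_{2,\ell}=d_2$. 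The remaining cases reduce to injectivity of $f$ or to triviality. I expect the main obstacle to be precisely the well-definedness of $g$ on the overlap and the cross-case injectivity: both require the non-forking hypotheses on both sides, together with the fact that subacts satisfy $SB=B$, so that the pieces of $g$ can only disagree on elements already governed by $f$.
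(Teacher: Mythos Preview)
Your proof is correct and essentially identical to the paper's: both use Fact~\ref{b-types} to obtain an isomorphism $f$ over $B$, extend it by the identity on $C$ to a map $g$, and verify well-definedness and injectivity using the non-forking hypotheses (via Lemma~\ref{bar-nf}) on $p$ and $q$ respectively. The only cosmetic differences are that the paper first amalgamates into a common ambient model (which, as you implicitly recognize, is unnecessary given Fact~\ref{b-types}(3)$\Rightarrow$(1)) and works with single elements rather than tuples, as $\gS(C)$ consists of $1$-types.
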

\begin{proof}
Let $p = \gtp(a_1 /C, D)$ and $q = \gtp(a_2/C, D)$, we may assume they are both realized in a fixed $D$ by the amalgamation property. Since $p\rest B = q\rest B$ there is  $f: Sa_1 \cup B \cong_B Sa_2 \cup B$ such that $f(a_1)=a_2$ by Fact \ref{b-types}.

Let $g: Sa_1 \cup C  \rightarrow  Sa_2 \cup C$ be given by:
 
\[ g(d)=\left\{\begin{array}{cl}f(d) & d \in Sa_1 \\ d &  d\in C\end{array}\right.\]
Observe that $g$ is a function because $f\rest {Sa_1 \cap C} = 1_{Sa_1 \cap C}$ as $C \cap Sa_1 \subseteq B$  since $p$ does not $\dnfs$-fork over $B$.  As we only have unary functions and  $f$ is a homomorphism, clearly $g$ is a homomorphism. It is clear that  $g$ is surjective so we show $g$ is injective. The only interesting case to consider is when $d \in Sa_1$, $c \in C$ and $f(d)=g(c)$. Since  $g(c) =c $ and $f(d) \in Sa_2$ because $f(a_1)=a_2$, it follows that $c\in C \cap Sa_2$. As $C \cap Sa_2 \subseteq B$ because $q$ does not $\dnfs$-fork over $B$, we have that $f(c)=c$. Hence $f(d) = f(c)$.  As $f$ is injective, $c=d$.  

Therefore, $\gtp(a_1 /C, D) = q = \gtp(a_2/C, D)$. \end{proof}

\begin{remark}
The previous result is similar to \cite[4.1]{mus}.
\end{remark}

\begin{lemma}\label{small-nf} Let $A$ be a subact of $B$. If $x \in B \backslash A$  and $A \cap Sx \neq \emptyset$, then there  is $Z \subseteq A$ such that $|Z| < \gamma_r(S)$ and $Sx \dnfs^{B}_{SZ} A$.
\end{lemma}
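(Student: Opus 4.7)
The plan is to build $Z$ from a small generating set of an appropriate left ideal of $S$. First, consider $I := \{s \in S : sx \in A\}$. Since $A$ is a subact, $I$ is closed under left multiplication by $S$, so $I$ is a left ideal of $S$, and it is nonempty by the hypothesis $A \cap Sx \neq \emptyset$. By Definition \ref{def-gamma}, $I$ admits a generating set $I_0$ with $|I_0| < \gamma(S) \leq \gamma_r(S)$, so $I = S I_0$. Set $Z := \{s x : s \in I_0\} \subseteq A$, which satisfies $|Z| \leq |I_0| < \gamma_r(S)$.

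It remains to verify $Sx \dnf^{B}_{SZ} A$. By Definition \ref{indp} this amounts to checking $SZ \leq Sx$, $SZ \leq A$, $Sx, A \leq B$, together with the pullback equality $Sx \cap A = SZ$ inside $B$. All four containments of subacts are immediate from $Z \subseteq Sx \cap A$ together with the fact that both $Sx$ and $A$ are subacts of $B$. The inclusion $SZ \subseteq Sx \cap A$ then follows at once. For the reverse inclusion, take $y \in Sx \cap A$ and write $y = tx$ with $t \in S$; since $tx \in A$ we have $t \in I = S I_0$, so $t = t' s_0$ for some $t' \in S$ and $s_0 \in I_0$, whence $y = t'(s_0 x) \in SZ$.

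I do not anticipate any substantive obstacle: the argument is essentially a pigeonhole on the left ideal $I$ using the definition of $\gamma(S)$, and the independence condition from Definition \ref{indp} reduces to an intersection equality that falls out of $I = S I_0$. The only minor subtlety worth flagging is that the statement uses $\gamma_r(S)$ rather than $\gamma(S)$, but since $\gamma(S) \leq \gamma_r(S)$ the required bound is automatic, so no case split on regularity of $\gamma(S)$ is actually needed here.
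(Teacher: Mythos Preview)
Your proof is correct and is in fact more direct than the paper's. Both arguments rest on the same observation: the set $I=\{s\in S:sx\in A\}$ is a left ideal of $S$, hence $(<\gamma(S))$-generated. You take a generating set $I_0$ for $I$ outright, push it forward to $Z=I_0x\subseteq A\cap Sx$, and verify $SZ=Sx\cap A$ directly. The paper instead argues by contradiction: assuming $Y=A\cap Sx$ admits no generating set of size $<\gamma_r(S)$, it builds an ``independent'' sequence $(z_\alpha)_{\alpha<\gamma_r(S)}$ in $Y$, pulls back to preimages $t_\alpha\in S$, and then uses that the ideal $S\{t_\alpha\}$ is $(<\gamma(S))$-generated together with the regularity of $\gamma_r(S)$ to locate an $\alpha$ beyond the generating indices and derive a contradiction. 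Your route avoids both the contradiction detour and the explicit appeal to regularity; the inequality $\gamma(S)\leq\gamma_r(S)$ suffices, as you note. The paper's approach has the minor advantage of making the role of $\gamma_r(S)$ more visible, but your version is cleaner and proves the same statement with slightly less overhead.
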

\begin{proof}
Let $Y = A \cap Sx $. We show that there is $Z \subseteq Y$ such that $| Z | < \gamma_r(S)$ and $SZ=Y$. Observe that this is clearly  enough as such a $Z$ would satisfy that $Sx \dnfs^{B}_{SZ} A$. 

Assume for the sake of contradiction that such a $Z$ does not exist. Then one can build $\{ z_\alpha : \alpha < \gamma_r(S) \} \subseteq Y$ such that $z_\alpha \notin S \{ z_\beta : \beta < \alpha\}$ by induction on $\alpha < \gamma_r(S)$. As every $z_\alpha \in Sx$, for every $\alpha < \gamma_r(S)$ there is $t_\alpha \in S$ such that $z_\alpha = t_\alpha x$. 

Let $I = S\{t_\alpha : \alpha < \gamma_r(S) \}$. Then there is $J \subseteq \gamma_r(S)$ such that $J\neq \emptyset$, $| J| < \gamma(S)$ and $I = S\{ t_j : j \in J\}$ by the definition of $\gamma(S)$. Since $\gamma_r(S)$ is regular and $|J| < \gamma(S)$, there is $\alpha < \gamma_r(S)$ such that $j <  \alpha$ for every $j \in J$. Hence there is $j_0 \in J$ such that $j_0 < \alpha$ and $t_\alpha = s t_{j_0}$ for some $s \in S$ . 

Therefore, $z_\alpha = t_\alpha x =  s t_{j_0} x = sz_{j_0}$ where the first and last equality follow from the last equation of the second paragraph of the proof and the middle equality follows from the last equation of the previous paragraph. Hence $z_\alpha =  sz_{j_0} \in Sz_{j_0}  \subseteq S \{ z_\beta : \beta < \alpha\}$. This contradicts the choice of $z_\alpha$. 
\end{proof}

The following result follows directly from Lemma \ref{small-nf}.

\begin{cor}\label{nf-gtp}
If $p = \gtp(x/B, N) \in \gS_{\Kk_{\aact}}(B)$, then there is a subset $Z$ of $B$ such that $|Z| < \gamma_r(S)$ and $p$ does not $\dnfs$-fork over $SZ$, i.e., $\dnfbs{SZ}{x}{B}{N}$.
\end{cor}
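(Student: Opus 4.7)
The plan is to reduce this to Lemma \ref{small-nf} essentially by a direct translation, after dispatching a couple of edge cases that fall outside the hypotheses of that lemma. Spell out what needs to be shown: by the definition of non-forking for types and Lemma \ref{bar-nf}, it suffices to find $Z\subseteq B$ with $|Z|<\gamma_r(S)$, such that $SZ\leq B$ and $Sx\cap B\subseteq SZ$ (here I use that $B$ is already a subact, so $SB=B$).

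First I would handle the trivial cases. If $x\in B$, then $Sx\subseteq B$, and taking $Z=\{x\}$ gives $SZ=Sx\subseteq B$, $|Z|=1<\gamma_r(S)$ (since $\gamma_r(S)\geq\aleph_0$), and $Sx\cap B=Sx=SZ$, so non-forking of $p$ over $SZ$ follows from Lemma \ref{bar-nf}. If instead $x\notin B$ but $Sx\cap B=\emptyset$, pick any $b_0\in B$ (nonempty since $B$ is a model) and let $Z=\{b_0\}$; then $SZ\leq B$, $|Z|=1<\gamma_r(S)$, and $Sx\cap B=\emptyset\subseteq SZ$, again giving the conclusion via Lemma \ref{bar-nf}.

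The main case is $x\in N\setminus B$ with $Sx\cap B\neq\emptyset$, which is exactly the hypothesis of Lemma \ref{small-nf} applied with the ``$A$'' of that lemma equal to our $B$ and the ``$B$'' of that lemma equal to our $N$. That application yields $Z\subseteq B$ with $|Z|<\gamma_r(S)$ and $Sx\;\dnf^{N}_{SZ}\;B$, which by Definition \ref{indp} means the square is a pullback, so in particular $Sx\cap B\subseteq SZ$. One more invocation of Lemma \ref{bar-nf} gives $\dnfb{SZ}{x}{B}{N}$, which is precisely the assertion that $p$ does not fork over $SZ$.

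There is no real obstacle here; the only care needed is the bookkeeping of which edge cases do not fall under the hypothesis of Lemma \ref{small-nf} (namely when $x\in B$, where $x$ is not in ``$B\setminus A$'' of the lemma, and when $Sx\cap B=\emptyset$), both of which are trivially disposed of by choosing a singleton $Z$. The substantive content, bounding the number of generators needed to absorb $Sx\cap B$, is entirely carried by Lemma \ref{small-nf} via the regularity of $\gamma_r(S)$ and the defining property of $\gamma(S)$.
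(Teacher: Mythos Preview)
Your proof is correct and follows essentially the same approach as the paper, which simply states that the result ``follows directly from Lemma \ref{small-nf}.'' You have merely made explicit the two trivial edge cases (when $x\in B$ and when $Sx\cap B=\emptyset$) that fall outside the hypotheses of that lemma; both are handled correctly by choosing a singleton $Z$.
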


We obtain the first half of the main result of this section.

\begin{lemma}\label{sta} Let $\lambda \geq 2^{\operatorname{card} (S)+ \aleph_0}$.
If $\lambda^{<\gamma(S)} = \lambda$, then $\Kk_{\aact}$ is stable in $\lambda$.
\end{lemma}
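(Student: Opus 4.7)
The plan is to combine the smallness property of non-forking (Corollary \ref{nf-gtp}) with the uniqueness of non-forking extensions (Fact \ref{uniq}) to reduce the computation of $|\gS(B)|$ for $\|B\|=\lambda$ to a count over small bases, and then count types over each small base via the concrete description of Galois types provided by Fact \ref{b-types}.

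Concretely, for each $p \in \gS(B)$, Corollary \ref{nf-gtp} picks out $Z_p\subseteq B$ with $|Z_p|<\gamma_r(S)$ such that $p$ does not fork over $SZ_p$, and Fact \ref{uniq} guarantees that $p$ is determined by the pair $(Z_p,\, p\rest SZ_p)$. Hence
\[
|\gS(B)| \;\leq\; \sum_{Z\subseteq B,\ |Z|<\gamma_r(S)} |\gS(SZ)|.
\]
The hypothesis $\lambda^{<\gamma_r(S)}=\lambda$ bounds the number of such $Z$ by $\lambda$, so it will suffice to prove $|\gS(SZ)|\leq\lambda$ whenever $|Z|<\gamma_r(S)$.

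To bound $|\gS(SZ)|$ I would use Fact \ref{b-types}: a type in $\gS(SZ)$ corresponds to an isomorphism class over $SZ$ of an extension $Sb\cup SZ$, which is determined by the assignment $s\mapsto sb$ together with the equivalence relation on $S$ given by $sb=tb$. A crude count then gives $|\gS(SZ)|\leq (|SZ|+\operatorname{card}(S)+\aleph_0)^{\operatorname{card}(S)+\aleph_0}$. Writing $\mu=\operatorname{card}(S)+\aleph_0$, I would split into two cases. If $|Z|\leq\mu$ then $|SZ|\leq\mu\leq 2^\mu\leq\lambda$ and the bound collapses to $2^\mu\leq\lambda$. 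If $|Z|>\mu$, then in particular $\mu<\gamma_r(S)$, so $\lambda^\mu\leq\lambda^{<\gamma_r(S)}=\lambda$, and the bound becomes at most $|Z|^\mu\leq\lambda^\mu\leq\lambda$.

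The real content is the reduction to small bases via non-forking, which is already packaged in Corollary \ref{nf-gtp} together with the uniqueness in Fact \ref{uniq}; the remaining analysis is elementary cardinal arithmetic. The mildly delicate point is the final case split, which requires both hypotheses ($\lambda\geq 2^{\operatorname{card}(S)+\aleph_0}$ to handle ``small'' $Z$, and $\lambda^{<\gamma_r(S)}=\lambda$ to handle $Z$ with $|Z|>\mu$) in order to uniformly bound $|\gS(SZ)|$ by $\lambda$.
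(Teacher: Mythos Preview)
Your proposal is correct and follows essentially the same strategy as the paper: both invoke Corollary \ref{nf-gtp} to pass to a small non-forking base and Fact \ref{uniq} to recover the type from its restriction, then count (the paper via a pigeonhole-and-contradiction argument, you via a direct sum). One simplification you missed: since every ideal of $S$ has at most $\operatorname{card}(S)$ generators, one always has $\gamma_r(S)\leq(\operatorname{card}(S)+\aleph_0)^+=\mu^+$, so your second case $|Z|>\mu$ is vacuous and the paper simply uses the uniform bound $|\gS(SZ)|\leq 2^{\|SZ\|+\aleph_0}\leq 2^{\mu}\leq\lambda$.
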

\begin{proof}
Let $B$ be an act of cardinality $\lambda$. Assume for the sake of contradiction that $| \gS(B)| > \lambda$. Let $\{ p_i :  i < \lambda^+ \} \subseteq \gS(B)$ such that $p_i \neq p_j$ if $i \neq j < \lambda^+$.

Let $\Phi: \lambda^+ \to  \mathcal{P}^{<\gamma_r(S) }(B) = \{ X \subseteq B : |X| < \gamma_r(S)\}$ be given  by $\Phi(i)= B_i$ such that $p_i$ does not $\dnfs$-fork over $SB_i$ and $B_i\in \mathcal{P}^{<\gamma_r(S)  }(B)$. Observe that the $B_i$ exists by Corollary \ref{nf-gtp}  and that $|\mathcal{P}^{<\gamma_r(S)  }(B)| \leq \lambda$ because $\lambda^{<\gamma(S)} = \lambda$ and by Remark \ref{card}. Then by the pigeon hole principle there are $X \subseteq \lambda^+$ and $B^* \in \mathcal{P}^{< \gamma_r(S) }(B)$ such that $|X| = \lambda^+$ and for every $i \in X$, $p_i$ does not $\dnfs$-fork over $SB^*$. 

Let $\Psi:  X \to \gS(SB^*)$ given  by $\Psi(i)= p_i \rest SB^*$. Observe that   $| \gS(S B^*) | \leq 2^{\| S B^* \|  + \aleph_0}  \leq 2^{\operatorname{card} (S)+ \aleph_0} \leq \lambda$. Then by applying the pigeon hole principle again there are $i \neq j \in X$ such that $p_i \rest SB^* = p_j \rest SB^*$. Hence $p_i = p_j$ by Fact \ref{uniq}. This is a contradiction as $p_i \neq p_j$ by construction. \end{proof}

\begin{remark}
The argument to get stability in $\lambda$ works for any AEC with intersection that has a stable independence relation such that that there is a cardinal $\kappa$ such that  for every $p = \gtp(x/B, N) \in \gS(B)$ there is a subset $Z$ of $B$ with $|Z | < \kappa$ and $p$ does not fork over $cl(Z)$. A similar argument is presented in \cite[7.5, 7.15]{gm}.
\end{remark}

\begin{remark}
Lemma \ref{sta} in particular shows that $\Kk_{\aact}$ is always a stable AEC. This result does not contradict the fact that there are unstable complete first-order  theories of acts \cite{mus} as we are basically only considering quantifier-free types (see Remark \ref{b-types}).
\end{remark}

We finish this subsection by presenting an algebraic result  which follows from Lemma \ref{small-nf}.  

\begin{cor}\label{sub-cyc}
If $A$ is a subact of $B$ and $B$ is cyclic, then $A$ is generated by fewer than $\gamma_r(S)$ elements.
\end{cor}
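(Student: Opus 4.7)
Proof proposal for Corollary \ref{sub-cyc}:

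The plan is to apply Lemma \ref{small-nf} essentially verbatim, exploiting the fact that when $B$ is cyclic the intersection ``$A \cap Sx$'' appearing in the lemma becomes all of $A$. Concretely, write $B = Sx$ for some generator $x$. There are three cases to handle. If $A = \emptyset$, then $A$ is trivially generated by $0 < \gamma_r(S)$ elements. If $x \in A$, then $A = Sx = B$ is generated by a single element, so again $|A| < \gamma_r(S)$ since $\gamma_r(S) \geq \aleph_0$.

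The main case is $A \neq \emptyset$ and $x \in B \setminus A$. Since $A \subseteq B = Sx$, every element of $A$ lies in $Sx$, so $A \cap Sx = A \neq \emptyset$. Lemma \ref{small-nf} then produces $Z \subseteq A$ with $|Z| < \gamma_r(S)$ such that $Sx \dnf^{B}_{SZ} A$. Unfolding Definition \ref{indp}, this pullback condition says $Sx \cap A = SZ$. Combined with $A \subseteq Sx$ this gives $A = SZ$. Since $Z \subseteq A$ and $A$ is a subact we also have the inclusion $SZ \subseteq A$ automatically, so the equality $A = SZ$ displays $A$ as generated by $Z$, a set of size less than $\gamma_r(S)$.

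I do not expect any genuine obstacle: the real work is already packaged into Lemma \ref{small-nf}. The only thing to be slightly careful about is making sure the two degenerate cases ($A$ empty, or $A$ containing the generator $x$) are disposed of before invoking the lemma, since the lemma's hypothesis requires $x \notin A$. Once that separation is made, the corollary is an immediate translation of the non-forking conclusion into a generation statement, using that in the cyclic case ``$Sx \cap A$'' is literally $A$.
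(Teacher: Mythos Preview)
Your proof is correct and follows essentially the same route as the paper's: invoke Lemma \ref{small-nf} with the cyclic generator $x$, observe that $A \cap Sx = A$, and read off $A \subseteq SZ$ from the non-forking conclusion. One small slip: in the case $x \in A$ you write ``so again $|A| < \gamma_r(S)$'', but you mean that the \emph{generating set} has size $1 < \gamma_r(S)$, not that $A$ itself is small; otherwise the argument is fine.
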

\begin{proof}  Assume $B =Sx$ for some $x \in B$ and $A \neq \emptyset$. If $x \in A$ we are done, so assume $x \notin A$. As $x \in B \backslash A$ and $A \cap Sx = A \neq \emptyset$, it follows from Lemma \ref{small-nf} that there is  $Z \subseteq A$ such that $|Z| < \gamma_r(S)$ and $Sx \dnfs^{B}_{SZ} A$. Then $A = A \cap Sx \subseteq SZ$. Hence $Z$ is as required.
 \end{proof}

 The reader interested in the algebraic characterization of superstability or the algebraic results of the next section can skip the rest of this section.

\subsection{Characterizing the stability cardinals} In this subsection we characterize all the stability cardinals  of $\Kk_{\aact}$. In order to do that, we use some deep results on AECs \cite{vaseyt}.

Assume $\Kk$ is an AEC and let $M \in \Kk_\lambda$, $M \lea N$ and  $p \in \gS(N)$. Recall that  \emph{$p$ ($\lambda$-)splits over $M$} if there are $N_1, N_2 \in \Kk_\lambda$ and $h: N_1 \cong_ M N_2$ such that $M \lea N_1, N_2 \lea N$ and $h(p\rest N_1) \neq p\rest N_2$ \cite[Definition 3.2]{sh394}.

\begin{lemma}\label{split-dnf}   Let $\lambda \geq \operatorname{card} (S)+ \aleph_0$. Assume $L_1 \leq L_2 \leq M$ are all  $S$-acts of cardinality $\lambda$ and $L_2$ is universal over $L_1$.
If $p \in \gS_{\Kk_{\aact}}(M)$ does not split over $L_1$, then $p$ does not $\dnfs$-fork over $L_2$.
\end{lemma}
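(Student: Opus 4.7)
The plan is to argue by contrapositive: I will assume $p$ forks over $L_2$ and construct witnesses forcing $p$ to split over $L_1$. Write $p = \gtp(a/M, N)$. By Lemma \ref{bar-nf}, non-forking of $p$ over $L_2$ amounts to $Sa \cap M \subseteq L_2$, so the forking assumption produces $s \in S$ with $b := s \cdot a \in M \setminus L_2$.

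The next step is to turn $b$ into a splitting witness via universality. Set $N_1 := L_1 \cup Sb \leq M$, so that $L_1 \leq N_1$ and $|N_1| = \lambda$. Since $L_2$ is universal over $L_1$, there is an embedding $h \colon N_1 \xrightarrow[L_1]{} L_2$. Put $N_2 := h[N_1] \leq L_2 \leq M$ and $b' := h(b) \in N_2$. Because $b' \in L_2$ while $b \notin L_2$, we have $b \neq b'$. The triple $(N_1, N_2, h)$ is a candidate splitting witness at $L_1$, so it suffices to check that $h(p\rest N_1) \neq p \rest N_2$.

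To verify this, use amalgamation to extend $h$ to a $\Kk_{\aact}$-embedding $\bar{h} \colon N \to N^*$ with $L_2 \leq N^*$, so that $h(p \rest N_1)$ is realized by $\bar{h}(a)$ over $N_2$. By Fact \ref{b-types}, equality of $h(p \rest N_1)$ and $p \rest N_2$ would produce an isomorphism $\phi \colon Sa \cup N_2 \cong_{N_2} S\bar{h}(a) \cup N_2$ with $\phi(a) = \bar{h}(a)$. Then $\phi(b) = \phi(sa) = s\bar{h}(a) = \bar{h}(b) = b'$; on the other hand, $\phi$ fixes $N_2$ pointwise and $b' \in N_2$, so $\phi(b') = b'$. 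Injectivity of $\phi$ forces $b = b'$, contradicting $b \notin L_2$. Hence $p$ splits over $L_1$, completing the contrapositive. The main obstacle is purely bookkeeping: aligning the $a \mapsto \bar{h}(a)$ requirement with $\phi$ fixing $N_2$ pointwise, so that the collision $\phi(b) = \phi(b') = b'$ falls out of applying $\phi$ to the single element $b = sa$.
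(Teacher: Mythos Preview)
Your proof is correct and follows essentially the same route as the paper: argue by contrapositive, pick $b=sa\in M\setminus L_2$, set $N_1=L_1\cup Sb$, use universality of $L_2$ over $L_1$ to move $N_1$ into $L_2$, and then show the resulting pair $(N_1,N_2)$ witnesses splitting via Fact~\ref{b-types}. The only cosmetic differences are that the paper first arranges $N\in(\Kk_{\aact})_\lambda$ and embeds all of $N$ into $L_2$ at once (so no separate amalgamation step is needed to produce your $\bar h$), and it invokes the quantifier-free type formulation of Fact~\ref{b-types} rather than the isomorphism formulation; the contradiction $b=s_1a=f(b)\in L_2$ in the paper is exactly your $\phi(b)=\phi(b')$ collision.
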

\begin{proof}
Let  $p = \gtp(a/M, N)$ for $a \in N$ and $M\leq N \in (\Kk_{\aact}
)_\lambda$. Assume for the sake of contradiction that $p$ $\dnfs$-forks over $L_2$. Then $Sa \cap M \nsubseteq L_2$ by Lemma \ref{bar-nf}, let $b \in (Sa \cap M)  \backslash L_2$.

Since $L_2$ is universal over $L_1$, there is $f: N \xrightarrow[L_1]{} L_2$. Let $N_1 = Sb \cup L_1$ and $N_2 = f[N_1]$. We show that $N_1, N_2$ witness that $p$ splits over $L_1$.

It is clear that $f:N_1 \cong_{L_1} N_2$ and $L_1 \leq N_1, N_2 \leq M$ as $b \in M$. We show that $f(p\upharpoonright N_1) \neq p \upharpoonright N_2$. Assume for the sake of  contradiction that $f(p\upharpoonright N_1)  =  p \upharpoonright N_2$, then $\operatorname{qftp}(f(a)/f[N_1], f[N]) = \operatorname{qftp}(a/f[N_1], N)$ by Fact \ref{b-types}. Observe that there is $s_1 \in S$ such that $s_1x= f(b) \in \operatorname{qftp}(f(a)/f[N_1], f[N]) $ and $N \models b = s_1 a$ as $b \in Sa$. As $\operatorname{qftp}(f(a)/f[N_1], f[N]) = \operatorname{qftp}(a/f[N_1], N)$,  $N \models s_1a =f(b)$. This is a contradiction as   $b = s_1a = f(b) \in L_2$ but $b \notin L_2$ by the the choice of $b$. 
\end{proof}

\begin{remark}
Shortly after obtaining Lemma \ref{split-dnf}, the first author surprisingly showed that the result holds in many AECs \cite[4.7]{limit25}. 
\end{remark}


The following notion was introduced in \cite[2.4, 3.8]{vaseyt} in a more general set-up.\footnote{It is equivalent to the definition of \cite{vaseyt} if $\Kk$ has amalgamation, joint embedding, no maximal models and $(<\aleph_0)$-tameness. All of these properties hold in $\Kk_{\aact}$ by Lemma \ref{act-aec} and Fact \ref{b-types}.}

\begin{defin}
Let $\Kk$ be an AEC stable in $\lambda$. 
\begin{enumerate}
\item Let $\underline{\kappa}(\Kk_\lambda, \lea^{u})$ be the set of ordinals $\delta < \lambda^+$ such that if  $\{ M_i : i \leq \delta \} \subseteq K_\lambda$  is an increasing continuous chain with $M_{i+1}$  universal over $M_i$ for every $i < \delta$ and  $p \in \gS(M_\delta), \text{ then there is } i <\delta \text{ such that } p \text{ does not split over } M_i$.

\item Let $\kappa(\Kk_\lambda, \lea^{u})$ be the least regular cardinal in $\underline{\kappa}(\Kk_\lambda, \lea^{u})$ if such a cardinal exists.
\end{enumerate}
\end{defin}

\begin{remark} If $A\leq B, C$ and $B \cap C \subseteq A$, then  $B \cup C$ is an $S$-act where 

$$s *_{B \cup C}  d=\left\{\begin{array}{cl} s *_B d & d \in B \\ s *_C d & d \in C    \end{array}\right.$$
such that $B, C \leq B \cup C$.

We denote this act by $B \cup C$.
\end{remark}

\begin{lemma}\label{g=k}
If $\Kk_{\aact}$ is stable in $\lambda$ and $\lambda \geq (\operatorname{card}(S) + \aleph_0)^+$, then $$\kappa((\Kk_{\aact})_\lambda, \leq^u) = \gamma_r(S).$$
\end{lemma}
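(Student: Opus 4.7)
The plan is to prove the two inequalities $\kappa((\Kk_{\aact})_\lambda, \leq^u) \leq \gamma_r(S)$ and $\kappa((\Kk_{\aact})_\lambda, \leq^u) \geq \gamma_r(S)$ separately.

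For the upper bound, observe that $\gamma_r(S) \leq (\operatorname{card}(S) + \aleph_0)^+ \leq \lambda$ and $\gamma_r(S)$ is regular by definition, so it suffices to show $\gamma_r(S) \in \underline{\kappa}((\Kk_{\aact})_\lambda, \leq^u)$. Given a chain $\{M_i : i \leq \gamma_r(S)\}$ as in the definition and $p \in \gS(M_{\gamma_r(S)})$, I would invoke Corollary \ref{nf-gtp} to obtain $Z \subseteq M_{\gamma_r(S)}$ with $|Z| < \gamma_r(S)$ such that $p$ does not fork over $SZ$. Regularity of $\gamma_r(S)$ combined with continuity of the chain places $Z$ inside some $M_i$; monotonicity via Lemma \ref{bar-nf} then gives that $p$ does not fork over $M_i$. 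To convert non-forking into non-splitting, given any $h: N_1 \cong_{M_i} N_2$ with $M_i \leq N_1, N_2 \leq M_{\gamma_r(S)}$, I would amalgamate so that both $p \upharpoonright N_1$ and the transported type $h^{-1}(p \upharpoonright N_2)$ live over $N_1$, check that both are non-forking extensions of $p \upharpoonright M_i$ (applying Lemma \ref{bar-nf} and invariance), and conclude they coincide by Fact \ref{uniq}, so $h(p \upharpoonright N_1) = p \upharpoonright N_2$.

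For the lower bound, fix a regular $\delta < \gamma_r(S)$. A case analysis on whether $\gamma(S)$ is regular or singular (combined with the fact that $\delta$ is regular) shows $\delta < \gamma(S)$, so there is an ideal $I \trianglelefteq S$ not generated by fewer than $\delta$ elements. By transfinite recursion I would extract a sequence $\{t_i : i < \delta\} \subseteq I$ with $t_i \notin S\{t_j : j < i\}$ for all $i < \delta$. The goal is then to build a chain $\{M_i : i \leq \delta\} \subseteq (\Kk_{\aact})_\lambda$ with each $M_{i+1}$ universal over $M_i$ (using Fact \ref{uni-exis}, given stability in $\lambda$), together with an $S$-homomorphism $g: I \to M_\delta$ satisfying $g(t_i) = a_i$ where $a_i \in M_{i+1} \setminus M_i$ is paired with a second element $a_i' \in M_{i+1}$ distinct from $a_i$ but realizing the same Galois type as $a_i$ over $M_i$. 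Forming the pushout $N := M_\delta \sqcup_I Sx$ along $g$ and the inclusion $I \hookrightarrow S = Sx$ produces $x \in N$ with $t_i x = a_i$ for each $i$; the type $p := \gtp(x / M_\delta, N)$ will be the desired witness.

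To see $p$ splits over $M_i$, take $N_1 = S(M_i \cup \{a_i\})$, $N_2 = S(M_i \cup \{a_i'\})$, and $h: N_1 \cong_{M_i} N_2$ sending $a_i \mapsto a_i'$. Then $h(p \upharpoonright N_1)$ contains $t_i x = a_i'$, while $p \upharpoonright N_2$ contains $t_i x \neq a_i'$ since $t_i x = a_i \neq a_i'$ in $N$; hence the two types differ.

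The main obstacle is the inductive construction of $g$: compatibility at stage $i+1$ requires $g_{i+1}(st_i) = sa_i$ to agree with $g_i$ on the overlap $St_i \cap S\{t_j : j < i\}$. This forces $a_i$ to realize a specific Galois type over $M_i$ rather than merely being a ``generic'' new element, namely the type which prescribes that the orbit $Sa_i$ is isomorphic to $St_i$ with the required values of $sa_i$ on the overlap already lying in $M_i$ as dictated by $g_i$. Universality of $M_{i+1}$ over $M_i$, together with disjoint amalgamation from Lemma \ref{act-aec}, is used to realize this type \emph{twice}, yielding the two elements $a_i, a_i'$ that witness splitting. Independence of the $\{t_i\}$ is what guarantees the required types are consistently realizable at each stage.
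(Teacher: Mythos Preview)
Your upper bound is essentially the paper's: both find $Z$ via Corollary~\ref{nf-gtp}, push into some $M_i$ by regularity, and pass from non-forking to non-splitting. The paper simply cites \cite[4.2]{bgkv} for that last step, whereas you unpack it via Fact~\ref{uniq}; this is fine and is exactly what that citation proves.

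Your lower bound is correct but genuinely different from the paper's. The paper does not build an auxiliary homomorphism $g$ or paired witnesses $(a_i,a_i')$; instead it arranges the chain so that $B_i \cap S = S\{a_j : j < i\}$ literally as subsets, takes $p = \gtp(1/B_\kappa, B_\kappa \cup S)$, and then---rather than exhibiting splitting directly---uses Lemma~\ref{split-dnf} to pass from ``$p$ does not split over $B_i$'' to ``$p$ does not fork over $B_{i+1}$'', reading off the contradiction $a_{i+1} \in S\{a_j : j \leq i\}$ from Lemma~\ref{bar-nf}. Your route trades that lemma for the doubling trick: you manufacture the splitting witnesses $N_1,N_2,h$ by hand using disjoint amalgamation, at the cost of having to maintain the compatibility of $g$ inductively (which, as you note, is handled by pushing out $S\{t_j:j<i\}\hookrightarrow S\{t_j:j\leq i\}$ along $g_i$ and using coregularity to keep $M_i$ embedded). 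Both arguments work; the paper's is shorter and exploits Lemma~\ref{split-dnf}, while yours is more self-contained and shows directly that each regular $\delta<\gamma_r(S)$ fails the locality condition.
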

\begin{proof}
We show first that $\gamma_r(S) \in \underline{\kappa}((\Kk_{\aact})_\lambda, \leq^u)$.  Let $\{ M_i : i \leq  \gamma_r(S) \} \subseteq (\Kk_{\aact})_\lambda$ be a continuous chain such that $M_{i+1}$ is universal over $M_i$ for every $i < \gamma_r(S)$ and  $p \in \gS(M_{\gamma_r(S)})$. Then there is $Z \subseteq M_{\gamma_r(S)}$ such that $|Z| < \gamma_r(S)$ and $p$ does not $\dnfs$-fork over $SZ$ by Lemma \ref{nf-gtp}. So there is $i < \gamma_r(S)$ such that $p$ does not $\dnfs$-fork over $M_i$ as $|Z| < \gamma_r(S)$ and $\gamma_r(S)$ is regular. Therefore, $p$ does not split over $M_i$ by \cite[4.2]{bgkv}. Hence $\gamma_r(S) \in \underline{\kappa}((\Kk_{\aact})_\lambda, \leq^u)$. 

As $\gamma_r(S) \in \underline{\kappa}((\Kk_{\aact})_\lambda, \leq^u)$, it follows that  $\kappa((\Kk_{\aact})_\lambda, \leq^u) \leq \gamma_r(S)$ by minimality of $\kappa((\Kk_{\aact})_\lambda, \leq^u)$. Assume for the sake of contradiction that ${\kappa((\Kk_{\aact})_\lambda, \leq^u)} < \gamma_r(S)$ and denote for simplicity $\kappa((\Kk_{\aact})_\lambda, \leq^u)$ by $\kappa$. Then there is an ideal $I$ which is not $(<\kappa)$-generated by the definition of $\gamma_r(S)$. Hence there are $\{ a_i : i < \kappa \} \subseteq S$ such that for every $i <\kappa$, $a_{i} \notin S \{ a_j : j < i \}$.

We build $\{ B_i : i < \kappa \} \subseteq (\Kk_{\aact})_\lambda$ an increasing continuous chain by induction such that:

\begin{enumerate}
\item for every $i< \kappa$, $B_{i+1} $ is universal over $B_{i}$, 

\item for every $i < \kappa$, $S\{a_j : j < i\} \leq B_{i}$, and
\item  for every $i< \kappa$, $B_{i} \cap S = S\{a_j : j <  i \}$ and $B_0 \cap S = \emptyset$. 
\end{enumerate}

\fbox{Enough} Let $B_{ \kappa}  = \bigcup_{i<   \kappa} B_i$. Observe that $B_{ \kappa} \cap S  = S\{ a_i : i < \kappa\}$ and $1 \notin B_{ \kappa}$  by Condition (3) of the construction.  Let $p = \gtp(1/  B_{\kappa}, B_{\kappa} \cup S ) \in \gS(B_{\kappa})$. Then there is an $i < \kappa$ such that $p$ does not split over $B_{i} $ by Condition (1) of the construction and the definition of $\kappa((\Kk_{\aact})_\lambda, \leq^u)$. Therefore,  $p$ does not $\dnfs$-fork over $B_{i + 1}$  by Condition (1) of the construction and Lemma \ref{split-dnf}. Hence $\dnfbs{ B_{i + 1}  }{1}{B_\kappa}{B_\kappa \cup S}$.

So $S1 \cap B_\kappa  \subseteq  B_{i + 1} $ by Lemma \ref{bar-nf}. It follows from Condition (3) of the construction that $S1 \cap B_\kappa =   S\{ a_j : j < \kappa\}$ and that $S \cap B_{i + 1} =  S \{a_j : j < i +1  \} $. Hence $  S\{ a_j : j <\kappa \} \subseteq S \{a_j : j < i +1  \} $. This is a contradiction as $a_{i+1} \notin   S \{a_j : j < i +1  \} $ by the choice of the $a_i$'s. 

\fbox{Construction} In the base step, let $B_0$ be an S-act of cardinality $\lambda$ such that $B_0 \cap S = \emptyset$. We carry out the induction step. If $j$ is a limit ordinal take unions, so we are left with the case when $j = i +1$. Since $ \Kk_{\aact}$ is stable in $\lambda$ and $B_i  \cup S\{a_j : j < i +1 \}  \in (\Kk_{\aact})_\lambda$ as $B_i \cap   S \{a_j : j  <  i +1 \}=  S\{a_j : j < i \} \leq B_{i}, S\{a_j : j < i +1 \}$, there is $C \in (\Kk_{\aact})_\lambda$ universal over  $B_i  \cup S\{a_j : j < i +1 \}  $. Then there is $B_{i+1}$ and $f$ an isomorphism such that  the following diagram commutes:

  \[
 \xymatrix{\ar @{} [dr]  B_{i+1} \ar[r]^{f}  & C \\
B_i  \cup S\{a_j : j < i +1 \}  \ar[u]^{\leq} \ar[r]^{\id} &  B_i  \cup S\{a_j : j < i +1 \} \ar[u]_{\leq} 
 }
\]

and $B_{i+1} \cap S = ( B_i  \cup S\{a_j : j < i +1 \}) \cap S$. 

It is clear that $B_{i+1}$ satisfies Conditions (2) and (3) and it satisfies Condition (1) by the amalgamation property and the choice of $C$. 
 \end{proof}

\begin{cor}\label{o-dir}
If $\lambda \geq 2^{\operatorname{card}(S) + \aleph_0}$ and $\Kk_{\aact}$ is stable in $\lambda$, then $\lambda^{<\gamma(S)} = \lambda$.
\end{cor}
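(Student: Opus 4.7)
The plan is to apply a general characterization of stability cardinals in tame AECs due to Vasey \cite{vaseyt}: for an AEC $\Kk$ with amalgamation, joint embedding, no maximal models, tameness, and stability in some cardinal, if $\lambda$ is at or above the first stability cardinal then $\Kk$ is stable in $\lambda$ if and only if $\lambda^{<\kappa(\Kk_\lambda, \lea^u)} = \lambda$. Combined with the local character computation of Lemma \ref{g=k}, this reduces the corollary to pure bookkeeping. This is precisely the ``deep result'' of AECs foreshadowed at the beginning of this subsection.

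First I would verify the hypotheses of Vasey's theorem for $\Kk_{\aact}$. Amalgamation, joint embedding and no maximal models are given by Lemma \ref{act-aec}. Tameness, in fact $(<\aleph_0)$-tameness, follows from Fact \ref{b-types}: Galois types in $\Kk_{\aact}$ coincide with quantifier-free types and are therefore determined by their restrictions to finite subsets of parameters. Stability in some cardinal is provided by Lemma \ref{sta}, which moreover situates the first stability cardinal at or below $2^{\operatorname{card}(S) + \aleph_0}$; so the lower bound $\lambda \geq 2^{\operatorname{card}(S) + \aleph_0}$ is exactly what is needed to invoke Vasey's characterization.

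Next, since $\lambda \geq 2^{\operatorname{card}(S) + \aleph_0} \geq (\operatorname{card}(S) + \aleph_0)^+$ and $\Kk_{\aact}$ is stable in $\lambda$ by hypothesis, Lemma \ref{g=k} yields $\kappa((\Kk_{\aact})_\lambda, \lea^u) = \gamma_r(S)$. Feeding this value into Vasey's characterization produces $\lambda^{<\gamma_r(S)} = \lambda$, which is the desired conclusion.

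The main obstacle is essentially bibliographic: I need to select the precise form of the theorem in \cite{vaseyt} whose hypotheses and cardinal bounds match the ones available here, and to ensure that it applies at every $\lambda \geq 2^{\operatorname{card}(S) + \aleph_0}$ rather than merely on a tail of cardinals (the tameness and amalgamation of $\Kk_{\aact}$ should suffice to eliminate any tail restriction). Beyond pinning down that citation, no further model-theoretic work is required; the substance of the argument lives in Lemma \ref{g=k}, and this corollary is simply its payoff when combined with the general AEC machinery.
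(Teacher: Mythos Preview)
Your approach is essentially the same as the paper's: verify the structural hypotheses for $\Kk_{\aact}$, invoke Vasey's stability characterization from \cite{vaseyt}, and plug in Lemma \ref{g=k}. The paper's proof differs only in technical bookkeeping that you correctly flagged as ``bibliographic'': the version of \cite[10.8]{vaseyt} actually cited yields $\lambda^{<\kappa^{wk}((\Kk_{\aact})_{\lambda^*}, \leq^u)} = \lambda$ for some auxiliary stability cardinal $\lambda^*$ (not $\lambda$), and the paper then uses $(<\aleph_0)$-tameness together with \cite[3.7, 2.8]{vaseyt} to collapse $\kappa^{wk}$ to $\kappa$ before applying Lemma \ref{g=k} at $\lambda^*$ rather than at $\lambda$.
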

\begin{proof}
It follows from \cite[10.8]{vaseyt} that $\lambda^{<\kappa^{wk}((\Kk_{\aact})_{\lambda^{*}}, \leq^u)} = \lambda$ for some  cardinal $\lambda^*  \geq (2^{\operatorname{card}(S) + \aleph_0})^+ $ such that $\Kk_{\aact}$ is stable in $\lambda^*$.\footnote{The cardinal 
${\kappa^{wk}((\Kk_{\aact})_{\lambda^*}, \leq^u)}$ is the least regular cardinal in $\underline{\kappa}^{\operatorname{wk}}((\Kk_{\aact})_{\lambda^*}, \lea^{u})$.  $\underline{\kappa}^{\operatorname{wk}}((\Kk_{\aact})_{\lambda^*} , \lea^{u})$ is defined as $\underline{\kappa}^{\operatorname{wk}}((\Kk_{\aact})_{\lambda^*} , \lea^{u})$ with the exception that one changes ``$p$ does not split over $M_i$" by ``$p\rest M_{i+1}$ does not split over $M_i$". These were originally introduced in \cite[2.4, 3.8]{vaseyt} for tame AECs.}  Observe that ${\kappa((\Kk_{\aact})_{\lambda^*}, \leq^u)} = {\kappa^{wk}((\Kk_{\aact})_{\lambda^*}, \leq^u)}$ since $\Kk_{\aact}$ is $(<\aleph_0)$-tame by Fact \ref{b-types} and by \cite[3.7, 2.8]{vaseyt} (see \cite[2.39]{limit25} for additional details). Hence $\lambda^{<\kappa((\Kk_{\aact})_{\lambda^*}, \leq^u)} = \lambda$. The result then follows from Lemma \ref{g=k} and Remark \ref{card}.
\end{proof}

We obtain the main result of this section. The result is stated at the beginning of the section.

\begin{proof}[Proof of Theorem 3.1]
Follows directly from Lemma \ref{sta} and Corollary \ref{o-dir}.
\end{proof}

The previous result is analogous to the result for modules \cite[3.6]{maz1} with the expection that in the result for modules $\lambda \geq (\operatorname{card}(S) + \aleph_0)^+$ instead of $ 2^{\operatorname{card}(S) + \aleph_0}$. It is worth pointing out that the proof of the result for modules is based on determining the cardinality of the injective envelope \cite{eklof}, while the result we present in this paper is proven by a purely model theoretic argument. 

\begin{question} \
\begin{enumerate}
\item Can  $2^{\operatorname{card}(S) + \aleph_0}$ be replaced by $(\operatorname{card}(S) + \aleph_0)^+$ in Theorem \ref{sta-2}?
\item Provide a proof of Corollary \ref{o-dir} using only algebraic methods. 
\end{enumerate}
\end{question}

\section{Limit models}

This section focuses on studying limit models (see Definition \ref{def-limit}) in $\Kk_{\aact}$. The first half of the section characterizes superstability algebraically while the second half of the section has some algebraic results that use limit models. We will always work in $\Kk_{\aact}$.

\subsection{Superstability}

 Recall that an act $A$ is \emph{$\kappa$-injective} if for every $I \subseteq S$ an ideal generated by fewer than $\kappa$ elements and $f: I \to A$ an $S$-homomorphism there is an $S$-homomorphism $g: S \to A$ such that $f(d) = g(d)$ for every $d\in I$ \cite[\S 2]{g-co}. It is important to note that a $\gamma(S)$-injective act might not be injective \cite[\S III.5]{kkm}. $\gamma(S)$-injective acts are called \emph{weakly injective acts}.

\begin{lemma}\label{l-inj} Assume $\delta < \lambda^+$ is a limit ordinal.
If $A$ is a $(\lambda, \delta)$-limit model, then $A$ is $\text{cf}(\delta)$-injective. Moreover, if $\text{cf}(\delta) \geq \gamma_r(S)$, then $A$ is injective. 
\end{lemma}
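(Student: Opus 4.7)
The plan is to exploit the defining chain $\{M_i : i<\delta\}\subseteq\Kk_\lambda$ of $A$ (with $A=\bigcup_{i<\delta}M_i$ and each $M_{i+1}$ universal over $M_i$). Both parts use the same basic construction: a cofinality argument pushes the relevant image into a single stage $M_i$, a pushout in $S$-$\Act$ realizes the desired extension, and universality of $M_{i+1}$ over $M_i$ carries the pushout back into $A$.

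For the first assertion, let $I\leq S$ be an ideal generated by $\{x_j : j<\kappa\}$ with $\kappa<\text{cf}(\delta)$, and let $f:I\to A$ be a homomorphism. Each $f(x_j)$ lies in some $M_{i_j}$; since $\kappa<\text{cf}(\delta)$ and the chain is continuous, $i:=\sup_j i_j<\delta$ satisfies $f(x_j)\in M_i$ for every $j$, and therefore $f[I]=S\{f(x_j):j<\kappa\}\subseteq M_i$, so $f$ factors through $M_i$. I would then form the pushout $P$ of $I\hookrightarrow S$ along this factored $f$; since pushouts along monomorphisms are monomorphisms in $S$-$\Act$ (Section 2), $M_i\leq P$, and the forgetful functor to $\Set$ preserves colimits, so $P\in\Kk_\lambda$. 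Universality of $M_{i+1}$ over $M_i$ supplies $h:P\to M_{i+1}$ with $h\rest M_i=\id$, and the composite $S\to P\xrightarrow{h}M_{i+1}\leq A$ extends $f$ by the pushout property.

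For the moreover clause, I would run a Zorn argument on partial extensions. Given a monomorphism $B\hookrightarrow C$ and $g:B\to A$, let $\tilde g:C_0\to A$ be a maximal extension of $g$ with $B\leq C_0\leq C$. Assuming $C_0\neq C$, pick $c\in C\setminus C_0$ and set $B_0:=C_0\cap Sc$. Since $Sc$ is cyclic, Corollary \ref{sub-cyc} shows $B_0$ is generated by fewer than $\gamma_r(S)\leq\text{cf}(\delta)$ elements, so the same cofinality trick places $\tilde g[B_0]$ inside some $M_i$. I would then form the pushout of $B_0\hookrightarrow Sc$ along $\tilde g\rest B_0:B_0\to M_i$, push it into $M_{i+1}$ by universality, and compose with $M_{i+1}\leq A$ to obtain $\phi:Sc\to A$ agreeing with $\tilde g$ on $B_0$. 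The assignments $\tilde g$ on $C_0$ and $\phi$ on $Sc$ then combine into a homomorphism $C_0\cup Sc\to A$ properly extending $\tilde g$, contradicting maximality, so $C_0=C$ and $A$ is injective.

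The step I expect to require the most care is the final pasting in the moreover part: verifying that $\tilde g\cup\phi:C_0\cup Sc\to A$ is a well-defined homomorphism. Agreement on the overlap $B_0=C_0\cap Sc$ is precisely what the pushout-plus-$h\rest M_i=\id$ construction provides, and because $S$-$\Act$ has only unary operations, compatibility on the overlap is all that is needed for the union of the two partial maps to be a homomorphism. Once this is verified, both assertions follow from the pushout-universality argument, using only the coregularity of $S$-$\Act$ from Section 2, Corollary \ref{sub-cyc}, the cofinality observation, and the built-in universality of the successor steps of the chain.
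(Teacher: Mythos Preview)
Your proof is correct and follows essentially the same line as the paper: the first part is exactly the cofinality-plus-pushout-plus-universality argument the paper has in mind (cf.\ the proof of Lemma~\ref{o-abs}), and the moreover part uses Corollary~\ref{sub-cyc} in the same way. The only cosmetic difference is that the paper invokes the Skornjakov--Baer criterion to reduce injectivity to extensions along $B\hookrightarrow C$ with $C$ cyclic, whereas you inline that reduction via a Zorn argument---your step of picking $c\in C\setminus C_0$ and working with $B_0=C_0\cap Sc$ is precisely the inductive core of Skornjakov--Baer.
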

\begin{proof} The argument given in \cite[3.2]{mlim} can be carried out in our setting (see also Lemma \ref{o-abs} for an idea of the proof). The moreover part follows from Corollary \ref{sub-cyc} and Skornjakov--Baer criterion for injectivity (see for example \cite[\S III.1.8]{kkm}).\end{proof}

In the case of modules,  Bumby's classical result for injective modules \cite{bumby} that  if $f: A \to B$, $g: B \to A$ are embeddings and $A, B $ are injective modules, then $A$ is isomorphic to $B$, can be used to show that long limit models are isomorphic. Since we do not know if Bumby's result holds for acts we have to work harder in this setting to show that long limit models are isomorphic. 

\begin{question}
Does Bumby's result hold for injective S-acts? If the answer is negative, characterize the monoids $S$ such that Bumby's result holds for injective S-acts.\footnote{Observe that it holds if $S$ is a group.}
\end{question}

\begin{lemma}\label{lim-sat} Assume $ \Kk_{\aact}$ is stable in $\lambda$ for $\lambda \geq (\operatorname{card}(S) + \aleph_0)^+$ and  $\mu$ is a regular cardinal with $\mu \leq \lambda$. If for every $B \in \Kk_{\aact}$  and $p \in \gS(B)$ there is $Z \subseteq B$ such that $p$ does not $\dnfs$-fork over $SZ$ and $|Z| < \mu$, then the $(\lambda, \mu)$-limit model is saturated.  
\end{lemma}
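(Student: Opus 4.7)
Let $M = \bigcup_{i<\mu} M_i$ witness that $M$ is a $(\lambda,\mu)$-limit model, with $\|M_i\|=\lambda$ and $M_{i+1}$ universal over $M_i$. Fix $M_0 \leq M$ with $\|M_0\| < \lambda$ and $p \in \gS(M_0)$; I want to produce a realization of $p$ in $M$. By the hypothesis choose $Z \subseteq M_0$ with $|Z| < \mu$ such that $p$ does not fork over $SZ$, and by regularity of $\mu$ find $i_0 < \mu$ with $Z \subseteq M_{i_0}$, so $SZ \leq M_{i_0}$.

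The plan is to reorganize the limit chain so that $M_0$ sits inside the base and then apply the standard non-forking realization argument. First I would choose $N_0 \leq M$ of size $\lambda$ with $M_0 \cup M_{i_0} \leq N_0$ (possible by the L\"owenheim--Skolem--Tarski axiom and $\|M\|=\lambda$), and then recursively build an increasing continuous chain $\{N_j\}_{j<\mu}$ of $\lambda$-sized submodels of $M$ with $N_{j+1}$ universal over $N_j$ and $\bigcup_{j<\mu} N_j = M$. The universal extensions at successor stages are found \emph{inside} $M$ by interleaving with the original chain, combining universality of $M_{i+1}$ over $M_i$ with the amalgamation property (Lemma \ref{act-aec}) and the existence of universal extensions guaranteed by stability in $\lambda$ (Fact \ref{uni-exis}); that the resulting chain exhausts $M$ is arranged using uniqueness of $(\lambda,\mu)$-limit models (Remark \ref{e-limit}).

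Given the reorganized chain, let $q \in \gS(N_0)$ be the non-forking extension of $p \upharpoonright SZ$ to $N_0$. Since $M_0 \leq N_0$ and $SZ \leq M_0$, the type $q \upharpoonright M_0 \in \gS(M_0)$ also extends $p \upharpoonright SZ$ and does not fork over $SZ$ by monotonicity. Applying Fact \ref{uniq} with $C = M_0$ and $B = SZ$ to the pair $p$ and $q \upharpoonright M_0$, I get $q \upharpoonright M_0 = p$. Now by universality of $N_1$ over $N_0$ the type $q$ is realized in $N_1 \leq M$ by some $a$, and then
\[
\gtp(a/M_0; M) \;=\; \gtp(a/N_0; N_1) \upharpoonright M_0 \;=\; q \upharpoonright M_0 \;=\; p,
\]
so $p$ is realized in $M$, as required.

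The main obstacle I anticipate is the chain-reorganization step: producing a $(\lambda,\mu)$-limit chain for $M$ whose base contains a prescribed $\lambda$-sized submodel $N_0 \leq M$. A direct attempt to realize the non-forking extension of $p \upharpoonright SZ$ in $M_{i_0+1}$ only controls the type over $M_{i_0}$, and when $\|M_0\| \geq \mu$ the submodel $M_0$ cannot be absorbed into a single $M_i$, so one genuinely needs to rebase the limit. This rebasing is a standard but delicate fact about limit models in stable AECs with amalgamation, typically established by a back-and-forth argument comparing $M$ to a $(\lambda,\mu)$-limit model constructed over $N_0$ and invoking Remark \ref{e-limit}.
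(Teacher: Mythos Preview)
Your rebasing step is where the argument breaks, and the ``standard fact'' you invoke is actually false in general. Take $S=\{1\}$, so that $S$-acts are just sets and embeddings are injections; every set of size $\lambda$ is then a $(\lambda,\mu)$-limit model, and ``$M$ is universal over $N_0$'' reduces to $|M\setminus N_0|=\lambda$. Pick $a\in M\setminus(B\cup M_{i_0})$ (such $a$ exists since $|B|<\lambda$ while $|M_{i_0+1}\setminus M_{i_0}|=\lambda$) and set $N_0=M\setminus\{a\}$: this is a $\lambda$-sized subact containing $B\cup M_{i_0}$, yet $|M\setminus N_0|=1$, so $M$ contains no universal extension of $N_0$ and is certainly not a $(\lambda,\mu)$-limit over $N_0$. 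The back-and-forth you sketch cannot run, because its ``back'' direction requires $M$ to be universal over intermediate models that need not sit inside any $M_i$, and Remark~\ref{e-limit} only supplies an abstract isomorphism $M\cong M'$, not one fixing $N_0$. Even committing to the specific choice $N_0=B\cup M_{i_0}$ does not rescue the plan without real work: universality of $M_{i_0+1}$ over $M_{i_0}$ gives an embedding of any extension of $N_0$ into $M$ fixing $M_{i_0}$, but there is no reason it should fix the part of $B$ lying outside $M_{i_0}$.

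The paper avoids rebasing altogether. Working with the original chain, it first manufactures (via iterated disjoint amalgamation over $B$) a family $\{n_i:i<\|B\|^+\}$ of realizations of $p$ with $Sn_i\cap Sn_j\subseteq B$ for $i\neq j$. It then embeds $SZ\cup S\{n_i:i<\|B\|^+\}$ into $M_{i_0+1}$ over $SZ$, using only universality of $M_{i_0+1}$ over $M_{i_0}$, and runs a pigeonhole argument: if every image $h(n_i)$ had some $\ell_i\in (Sh(n_i)\cap B)\setminus SZ$, two of the $\ell_i$ would coincide in $B$, forcing (via injectivity of $h$ and the pairwise condition) an element of $Sn_i\cap B\subseteq SZ$ to equal $\ell_i$, a contradiction. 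The surviving $h(n_i)$ then satisfies $Sh(n_i)\cap B\subseteq SZ$, so $\gtp(h(n_i)/B,M)$ does not fork over $SZ$ and agrees with $p$ on $SZ$; Fact~\ref{uniq} finishes. The many pairwise-independent realizations are exactly what lets the paper control the type over $B$ without ever needing $B$ to lie inside a member of the witnessing chain.
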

\begin{proof}Fix  $\{ M_i : i < \mu\} \subseteq (\Kk_{\aact})_\lambda$ a witness to the fact that $M$ is a $(\lambda, \mu)$-limit model. Let $p \in \gS(B)$ with $\operatorname{card}(S) + \aleph_0 \leq \|  B \| < \lambda$ and $B \leq M$. If $p$ is realized in $B$, then clearly $M$ realizes $p$. So we may assume that $p$ is not realized in $B$.

\underline{Claim 1:} There is $N$ with $B \leq N$ and $\{ n_i : i < \| B\|^+\}$ such that $p = \gtp(n_i/B, N)$ for every $i < \| B\|^+$ and $Sn_i \cap Sn_j \subseteq B$ for every $i \neq j <  \| B\|^+$. 

\underline{Proof of Claim 1.}  Let $p = \gtp(a/B, A)$ for $A \in (\Kk_{\aact})_{\| B \|}$ and $a \in A \backslash B$. We build $\{ N_i : i <  \| B\|^+\} \subseteq (\Kk_{\aact})_{\leq \lambda}$ an increasing continuous chain and $\{ n_i : i < \| B\|^+\}$ by recursion such that:

\begin{enumerate}
\item  $N_0= B$,
\item  for every $i <  \| B\|^+$, $n_i \in N_{i +1 }$ and $p = \gtp(n_i/ B, N_{i +1 })$,
\item for every $i <  \| B\|^+$, $Sn_{i} \cap  N_i  \subseteq B$. 
\end{enumerate}

Let $N = \bigcup_{i < \| B\|^+} N_i$. Observe that if $j < i$, then $Sn_i \cap Sn_j \subseteq Sn_i \cap N_{i} \subseteq B$. Therefore, $N$  and $\{ n_i : i < \| B\|^+\}$ are as required. 

We carry out the construction by induction on $i$. The base step is given by Condition (1) and we take unions in limit stages, so we only need to consider the successor step. Assume $j = i +1$.  Then applying the disjoint amalgamation property there are $N_{i+1} \in (\aact)_\lambda$ and $f$ such that the following diagram commutes:

  \[
 \xymatrix{\ar @{} [dr]  A \ar[r]^{ f}  & N_{i+1} \\
 B \ar[u]^{\leq} \ar[r]^{\leq} &  N_i \ar[u]_{\leq} 
 }
\]
  
  and $f[A] \cap N_i = B$. Let $n_{i} = f(a) \in  N_{i+1}$. Observe that $Sn_i \cap N_i  \subseteq f[A] \cap N_i = B$. Hence $N_{i}$ and $n_i \in  N_{i+1}$ are as required.   $\dagger_{\text{Claim 1}}$

Let $N$ and $\{ n_i : i < \| B\|^+\}$ be  as given by Claim 1. By assumption, there is  $Z \subseteq B$ such that $p$ does not $\dnfs$-fork over $SZ$ and $|Z| < \mu$. As $\mu$ is regular, there is an $i < \mu$ such that $SZ \leq M_i$. Then by the amalgamation property there are $f$ and $N^* \in (\aact)_\lambda$ such that the following diagram commutes:

  \[
 \xymatrix{\ar @{} [dr]  SZ \cup S\{n_i :  i < \| B\|^+\}  \ar[r]^{\hspace{1.3cm} f}  & N^* \\
SZ \ar[u]^{\id} \ar[r]^{\id} &  M_i  \ar[u]_{\id} 
 }
\]

As $M_{i+1}$ is universal over $M_i$, there is $g: N^* \xrightarrow[SZ]{} M_{i+1}$. Let $h = g\circ f: SZ \cup S\{n_i :  i < \| B\|^+\} \xrightarrow[SZ]{} M_{i+1}$.

\underline{Claim 2:} There is $i < \| B\|^+$ such that $Sh(n_i) \cap B \subseteq SZ$.

\underline{Proof of Claim 2.}  Assume for the sake of contradiction that this is not the case. Then for every $i <  \| B\|^+$, there is $\ell_i \in Sh(n_i) \cap B$ and $\ell_i \notin  SZ$. Then there are $i\neq j  < \| B\|^+$ such that $\ell_i = \ell_j$. As $\ell_i \in Sh(n_i)$ and $\ell_j \in Sh(n_j)$, there are $s_i, s_j\in S$ such that $h(s_in_i)= \ell_i = \ell_j = h(s_j n_j)$. As $h$ is injective, $s_in_i = s_j n_j$. Then by Claim 1 it follows that  $s_in_i \in B$. Hence $s_in_i \in B \cap Sn_i$. As $ B \cap Sn_i \subseteq SZ$ by Lemma \ref{bar-nf}, because $p$ does not $\dnfs$-fork over $SZ$ and $n_i$ realizes $p$, it follows that  $s_in_i \in SZ$. Hence $\ell_i = s_i h(n_i) = h(s_i n_i) = s_i n_i \in SZ$ where the last equality follows from the fact that $h$ fixes $SZ$. This contradicts the choice of $\ell_i$.  $\dagger_{\text{Claim 2.}}$

Let $i < \| B\|^+$ be as given in Claim 2. Observe that $p, \gtp(h(n_i)/B, M) \in \gS(B)$, $p$  does not $\dnfs$-fork over $SZ$ by assumption, and  $\gtp(h(n_i)/B, M)$ does not $\dnfs$-fork over $SZ$ by Claim 2. Furthermore, $p\upharpoonright {SZ} =  \gtp(h(n_i)/B, M) \upharpoonright {SZ}$ by Claim 1 and as $h\upharpoonright SZ = \id_{SZ}$. Then  $p =  \gtp(h(n_i)/B, M)$ by uniqueness of non-forking extensions (Fact \ref{uniq}). Therefore, $M$ realizes $p$. \end{proof}

\begin{cor}\label{lim-sat2}
Assume $\Kk_{\aact}$ is stable in $\lambda$ for $\lambda \geq (\operatorname{card}(S) + \aleph_0)^+$. If   $\mu$ is a regular cardinal such that $\gamma_r(S) \leq \mu \leq \lambda$, then the $(\lambda, \mu)$-limit model is saturated.  
\end{cor}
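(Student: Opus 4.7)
The plan is to derive this as a direct consequence of Lemma \ref{lim-sat} by supplying the hypothesis on the locality of non-forking extensions. Lemma \ref{lim-sat} already does all the heavy lifting: given a regular $\mu \leq \lambda$ such that every type over every base has a non-forking witness set of size strictly less than $\mu$, the $(\lambda,\mu)$-limit model is saturated. So the only thing I need to verify is that $\mu$, in the present hypothesis, is large enough to accommodate the natural locality bound produced by the independence relation on $\Kk_{\aact}$.

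The key locality fact is Corollary \ref{nf-gtp}, which says that for every $B \in \Kk_{\aact}$ and every $p = \gtp(x/B,N) \in \gS(B)$, there is $Z \subseteq B$ with $|Z| < \gamma_r(S)$ such that $\dnfb{SZ}{x}{B}{N}$, i.e., $p$ does not fork over $SZ$. Since the hypothesis of the corollary gives $\gamma_r(S) \leq \mu$, it follows immediately that $|Z| < \gamma_r(S) \leq \mu$, so $p$ admits a non-forking witness $Z$ of size less than $\mu$.

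Thus the hypothesis of Lemma \ref{lim-sat} is satisfied: $\Kk_{\aact}$ is stable in $\lambda$ with $\lambda \geq (\operatorname{card}(S) + \aleph_0)^+$, the cardinal $\mu$ is regular with $\mu \leq \lambda$, and every Galois type over any base has a non-forking witness of size less than $\mu$. Applying Lemma \ref{lim-sat} directly yields that the $(\lambda, \mu)$-limit model is saturated. There is no real obstacle here; the corollary is essentially a bookkeeping observation that the abstract cardinal parameter $\mu$ in Lemma \ref{lim-sat} can be instantiated by any regular cardinal at least $\gamma_r(S)$ (and at most $\lambda$), thanks to the uniform locality bound given by Corollary \ref{nf-gtp}.
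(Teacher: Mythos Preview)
Your proof is correct and follows essentially the same approach as the paper: the paper's proof simply says it follows from Corollary~\ref{nf-gtp} and Lemma~\ref{lim-sat}, which is exactly what you do.
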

\begin{proof}
Follows from Corollary \ref{nf-gtp} and Lemma \ref{lim-sat}.
\end{proof}

\begin{cor}\label{long-lim} Assume $\Kk_{\aact}$ is stable in $\lambda$ and $\lambda \geq (\operatorname{card}(S) + \aleph_0)^+$. 
  If $A$ is a $(\lambda, \alpha)$-limit model, $B$ is a $(\lambda, \beta)$-limit model and $\operatorname{cf}(\alpha),\operatorname{cf}(\beta) \geq \gamma_r(S)$, then $A$ and $B$ are isomorphic. 
\end{cor}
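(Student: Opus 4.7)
The plan is to show that $A$ and $B$ are both saturated of cardinality $\lambda$ and then invoke the uniqueness of saturated models (Fact~\ref{sat-iso}). Saturation will come from Corollary~\ref{lim-sat2}, which requires the chain length to be a regular cardinal at least $\gamma_r(S)$; the bridge is the standard observation that a $(\lambda,\alpha)$-limit model is also a $(\lambda,\text{cf}(\alpha))$-limit model, so after this reindexing the hypotheses are met.

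First I would verify the cofinality reduction. Given a witnessing chain $\{M_i : i < \alpha\}$ for $A$, I would choose an increasing continuous cofinal sequence $\{j_k : k < \text{cf}(\alpha)\}$ in $\alpha$ and set $N_k := M_{j_k}$. Two things need checking: continuity of the new chain at a limit $k$, which reduces to $j_k = \sup_{k' < k} j_{k'}$ combined with continuity of $\{M_i\}$, so that $N_k = M_{j_k} = \bigcup_{i < j_k} M_i = \bigcup_{k' < k} N_{k'}$; and universality of $N_{k+1}$ over $N_k$, which follows because $M_{j_k + 1}$ is already universal over $M_{j_k}$ and sits inside $M_{j_{k+1}} = N_{k+1}$, so universality is inherited along this inclusion. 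The same reduction applied to $B$ shows that $B$ is a $(\lambda,\text{cf}(\beta))$-limit model.

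Having reduced to regular cofinalities $\text{cf}(\alpha), \text{cf}(\beta) \in [\gamma_r(S), \lambda]$ (with the upper bound holding because $\alpha, \beta < \lambda^+$), I would then invoke Corollary~\ref{lim-sat2} to conclude that $A$ and $B$ are both saturated of size $\lambda$. By Lemma~\ref{act-aec} the class $\Kk_{\aact}$ has amalgamation, joint embedding and no maximal models, and the hypotheses required to apply Corollary~\ref{lim-sat2} force $\lambda > \LS(\Kk_{\aact})$. Fact~\ref{sat-iso} then yields $A \cong B$.

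The only genuinely delicate step is the cofinality reindexing — in particular, keeping track of continuity of the subchain at limit stages and verifying that universality of $M_{j_k+1}$ over $M_{j_k}$ transfers to $N_{k+1}$. Once that standard AEC manipulation is in place, the conclusion is essentially a direct application of the machinery already developed in the preceding sections.
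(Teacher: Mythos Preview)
Your proposal is correct and follows essentially the same route as the paper: the paper's proof simply reads ``Follows from Lemma~\ref{lim-sat} and Fact~\ref{sat-iso},'' which is exactly your strategy of showing both limit models are saturated and then invoking uniqueness of saturated models. You are more explicit than the paper about the standard cofinality reindexing (passing from a $(\lambda,\alpha)$-limit to a $(\lambda,\operatorname{cf}(\alpha))$-limit), which the paper leaves implicit, and you cite Corollary~\ref{lim-sat2} rather than Lemma~\ref{lim-sat} directly, but these differ only in packaging.
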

\begin{proof}
Follows from Corollary \ref{lim-sat2} and Fact \ref{sat-iso}.
\end{proof}

We turn our attention to short limit models. We will say that an ideal $I$ is \emph{strictly $\kappa$-generated} if it is generated by $\kappa$ elements but can not be generated by fewer than $\kappa$ elements. 

The following result can be obtained as in \cite[3.4]{mlim}. 

\begin{lemma}\label{non-inj} Assume $\Kk_{\aact}$ is stable in $\lambda$ and $\kappa \leq \lambda$.
If $I$ is a strictly $\kappa$-generated ideal, then the $(\lambda, \kappa)$-limit model is not $\kappa^+$-injective. 
\end{lemma}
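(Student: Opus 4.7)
The plan is to adapt the chain construction in the proof of Lemma~\ref{g=k}. First, since $I$ is not generated by fewer than $\kappa$ elements, I would fix an irredundant enumeration of generators $\{a_i : i < \kappa\}$ of $I$ satisfying $a_i \notin S\{a_j : j < i\}$ for every $i < \kappa$. This is obtained by starting from any $\kappa$-sized generating set of $I$ and greedily discarding each element already lying in the subact generated by its predecessors; the surviving subfamily still generates $I$, and strict $\kappa$-generation forces it to have cardinality exactly $\kappa$.

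Next I would construct an increasing continuous chain $\{M_i : i < \kappa\} \subseteq (\Kk_{\aact})_\lambda$ mirroring the ``Construction'' step of Lemma~\ref{g=k}, imposing the three conditions: $M_{i+1}$ is universal over $M_i$; $S\{a_j : j < i\} \leq M_i$; and $M_i \cap S = S\{a_j : j < i\}$ (with $M_0 \cap S = \emptyset$). At each successor step, $M_i \cap S\{a_j : j < i+1\} = S\{a_j : j < i\}$, so $M_i \cup S\{a_j : j < i+1\}$ is a well-defined act of cardinality $\lambda$; stability in $\lambda$ then supplies a universal extension, and disjoint amalgamation (arranged exactly as in Lemma~\ref{g=k}) produces $M_{i+1}$ with the prescribed intersection with $S$. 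The resulting $M = \bigcup_{i < \kappa} M_i$ is a $(\lambda,\kappa)$-limit model satisfying $M \cap S = I$.

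To conclude, suppose for contradiction that $M$ is $\kappa^+$-injective. Since $I$ is generated by $\kappa$ elements and the inclusion $\iota : I \to M$ is an $S$-homomorphism, $\iota$ extends to an $S$-homomorphism $g : S \to M$. Set $b := g(1)$; then $g(s) = s b$ for every $s \in S$, and because $g$ extends $\iota$, we have $a_i b = g(a_i) = a_i$ in $M$ for every $i < \kappa$. Pick $j < \kappa$ with $b \in M_j$. Then $a_i = a_i b \in M_j$ for every $i < \kappa$, so $I \subseteq M_j \cap S = S\{a_\ell : \ell < j\}$, contradicting the strict $\kappa$-generation of $I$.

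I expect the main (and essentially only) obstacle to be routine: verifying that the successor-step construction of Lemma~\ref{g=k} adapts verbatim, in particular that disjoint amalgamation can be arranged so that $M_{i+1} \cap S$ equals $S\{a_j : j < i+1\}$ and not something larger. The rest of the argument closely parallels the key computation in the ``Enough'' step of Lemma~\ref{g=k}, with $1 \in S$ playing an analogous role but producing a failure of extension of an ideal homomorphism rather than a failure of non-splitting.
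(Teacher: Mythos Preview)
Your proposal is correct and is essentially the argument the paper has in mind: the paper does not spell out a proof here but refers to \cite[3.4]{mlim}, whose module-theoretic argument is precisely the chain construction you describe, and which is the same construction carried out in Lemma~\ref{g=k} of the present paper. One small point worth making explicit is that you have produced \emph{one} $(\lambda,\kappa)$-limit model that fails $\kappa^+$-injectivity, and the statement concerns ``the'' $(\lambda,\kappa)$-limit model; this is harmless by the uniqueness in Remark~\ref{e-limit}.
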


The following notion is a parametrized version of weakly noetherian  monoids. Weakly noetherian monoids (sometimes called noetherian monoids) have been thoroughly studied within semigroup theory, see for example \cite{mil}, \cite[\S IV.3.5]{kkm}.

\begin{defin} Assume $\kappa$ is a cardinal. 
A monoid $S$ is \emph{weakly (left) $(<\kappa)$-noetherian} if every (left) ideal is generated by fewer that $\kappa$ elements. If $\kappa = \aleph_0$, we say that $S$ is \emph{weakly  noetherian}. 
\end{defin}

Observe that if $S$ is weakly $(<\kappa)$-noetherian, then $\gamma(S)\leq \kappa$. 

We characterize superstability algebraically. This provides
additional evidence to the already strong case  \cite{maz1}, \cite{m4}, \cite{maz2}, \cite{mj} that superstability is a natural algebraic property.

\begin{theorem}\label{equiv} Assume $S$ is a monoid. 
$S$ is weakly left noetherian if and only if $ (S\text{-acts}, \leq)$ is superstable.
\end{theorem}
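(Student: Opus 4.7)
The plan is to prove each direction separately, leveraging the stability spectrum (Theorem \ref{sta-2}), the injectivity properties of limit models (Lemmas \ref{l-inj} and \ref{non-inj}), and the uniqueness of long limit models (Corollary \ref{long-lim}).

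For the forward direction, I would assume $S$ is weakly left noetherian, which means every left ideal is finitely generated, so $\gamma(S) = \aleph_0$ and hence $\gamma_r(S) = \aleph_0$ (since $\aleph_0$ is regular). Then Theorem \ref{sta-2} yields that $\Kk_{\aact}$ is stable in every $\lambda \geq 2^{\operatorname{card}(S)+\aleph_0}$, because the cardinal arithmetic condition $\lambda^{<\aleph_0} = \lambda$ is automatic. Setting $\mu := 2^{\operatorname{card}(S)+\aleph_0}$, for every $\lambda \geq \mu$ the existence of limit models is guaranteed by Remark \ref{e-limit}, and since every limit ordinal $\alpha < \lambda^+$ satisfies $\operatorname{cf}(\alpha) \geq \aleph_0 = \gamma_r(S)$, Corollary \ref{long-lim} implies that all $\lambda$-limit models are isomorphic. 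Hence $\Kk_{\aact}$ is superstable.

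For the backward direction, I would argue by contradiction: assume $\Kk_{\aact}$ is superstable witnessed by $\mu$, but $S$ is not weakly left noetherian. Let $\kappa$ be the least infinite cardinal that is the cardinality of a minimal generating set of some non-finitely-generated left ideal $I$ of $S$; by construction $I$ is strictly $\kappa$-generated. Choose any regular cardinal $\lambda \geq \mu + \kappa + \gamma_r(S)$. By Remark \ref{limit1} (superstability implies stability in $\lambda$), both the $(\lambda, \kappa)$-limit model $M_\kappa$ and the $(\lambda, \lambda)$-limit model $M_\lambda$ exist. Lemma \ref{l-inj} applied to $M_\lambda$ (using $\operatorname{cf}(\lambda) = \lambda \geq \gamma_r(S)$) shows $M_\lambda$ is injective, hence in particular $\kappa^+$-injective. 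On the other hand, Lemma \ref{non-inj} applied to the strictly $\kappa$-generated ideal $I$ shows that $M_\kappa$ fails to be $\kappa^+$-injective. Thus $M_\kappa \not\cong M_\lambda$, contradicting uniqueness of the $\lambda$-limit model.

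The main obstacle I expect is simply making sure every cardinal parameter lines up: $\kappa$ must be a limit ordinal with $\kappa < \lambda^+$, $\lambda$ must lie above the superstability threshold and above $\gamma_r(S)$ (so that $M_\lambda$ is genuinely injective), and one must invoke that injectivity implies $\kappa^+$-injectivity so the failure at level $\kappa^+$ from Lemma \ref{non-inj} actually contradicts $M_\lambda$'s injectivity. Once these are arranged, both directions are essentially bookkeeping on top of the earlier infrastructure, with no additional algebraic input required beyond the observation that a non-weakly-noetherian monoid admits a strictly $\kappa$-generated ideal for some infinite $\kappa$.
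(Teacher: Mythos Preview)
Your proof is correct and follows essentially the same approach as the paper. The only cosmetic difference is in the backward direction: the paper observes that a non-weakly-noetherian monoid always admits a strictly $\aleph_0$-generated ideal (so your $\kappa$ is necessarily $\aleph_0$) and then compares the $(\lambda,\omega)$-limit with the $(\lambda,\omega_1)$-limit using only the first clause of Lemma~\ref{l-inj}, avoiding the need to pick $\lambda\geq\gamma_r(S)$ and invoke full injectivity of a $(\lambda,\lambda)$-limit.
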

\begin{proof}
$\Rightarrow$: Since $\gamma(S) = \gamma_r(S)= \aleph_0$, it follows that $\Kk_{\aact}$ has a unique $\lambda$-limit model for every  $\lambda \geq   2^{\operatorname{card}(S) + \aleph_0}$ by Lemma \ref{sta} and Corollary \ref{long-lim}.

 $\Leftarrow$: Assume for the sake of contradiction that $S$ is not weakly left noetherian. Let $\lambda \geq  2^{\operatorname{card}(S) + \aleph_0}$ such that $\Kk_{\aact}$ has uniqueness of $\lambda$-limit models. In particular the $(\lambda, \omega)$-limit model is isomorphic to the $(\lambda, \omega_1)$-limit model. So the $(\lambda, \omega)$-limit model is $\aleph_1$-injective by Lemma \ref{l-inj}. On other hand, since $S$ is not weakly left noetherian there is $I$ a strictly $\aleph_0$-generated ideal. Hence the $(\lambda, \omega)$-limit model is not $\aleph_1$-injective by Lemma \ref{non-inj}. This is clearly a contradiction. 
 \end{proof}
 
 \begin{remark}
 
 The result above is an extension of the following result for modules \cite[3.12]{maz1}:   Assume $R$ is a ring. 
$R$ is left noetherian if and only if the class of $R$-modules with embeddings  is superstable. The  argument in  \cite{maz1} is very different to the one given here, but the result can be obtained by a similar argument (see  \cite[3.17]{mlim}).
 \end{remark}

 \begin{remark}\label{first-ext} A classical result of the first-order model theory of acts \cite{iva}, \cite[3.5]{fogo} is that if $S$ is left coherent then:
 \begin{itemize}
 \item the model companion of the theory of $S$-acts is stable;
 \item $S$ is weakly left noetherian if and only if the model companion of the theory of $S$-acts is superstable. 
\end{itemize}  

These results follow directly from Theorem \ref{sta-2} and Theorem \ref{equiv}. Since if $S$ is left coherent, then  there is a bijection between the first-order complete types in the model companion of the theory of $S$-acts and the Galois types of $\Kk_{\aact}$. This follows from Fact \ref{b-types}, quantifier-elimination of the model companion and the fact that every act can be embedded into a model of the model companion. 

Therefore,  Theorem \ref{sta-2} and Theorem \ref{equiv} can be seen as an extension of  \cite{iva}, \cite[3.5]{fogo}. It is worth mentioning that the methods used in \cite{iva}, \cite[3.5]{fogo} are very different to the methods used in this paper.
 \end{remark}
 
  \begin{remark}\label{lim=ss}
 Using Theorem \ref{sta-2} one can show that $S$ is weakly left noetherian if and only if $ (S\text{-acts}, \leq)$ is stable in a tail of cardinals. The forward direction is clear. As for the backward direction, if $S$ is not weakly noetherian then $\gamma(S) > \aleph_0$ and since there are arbitrarily large cardinals of cofinality $\aleph_0$, it follows from K\"{o}nig's Lemma that there are arbitrarily large cardinals where  $ (S\text{-acts}, \leq)$ is unstable.
 \end{remark}

We provide a few examples of when  $ (S\text{-acts}, \leq)$ is superstable.

\begin{cor} If $S$ is a finite monoid, $S$ is a group or $S$ is a finitely generated commutative monoid, then $ (S\text{-acts}, \leq)$ is superstable.
\end{cor}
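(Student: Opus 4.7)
The plan is to invoke Theorem~\ref{equiv}, which identifies superstability of $\Kk_{\aact} = (S\text{-acts}, \leq)$ with the purely algebraic condition that $S$ is weakly left noetherian. Thus in each of the three cases the task reduces entirely to checking that every left ideal of $S$ is finitely generated.

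The finite case is immediate: every left ideal $I \subseteq S$ is itself finite and hence generated by its own finitely many elements. The group case is almost as quick. Any nonempty left ideal $I$ actually equals $S$, because fixing $a \in I$ we have $b = (ba^{-1})a \in SI \subseteq I$ for every $b \in S$; hence the only left ideals of $S$ are $\emptyset$ and $S = S \cdot 1$, both trivially finitely generated. In each of these two subcases we then apply Theorem~\ref{equiv} to conclude superstability.

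The commutative case is where the real work lies, and is the step I expect to be the main obstacle. The plan is to exploit commutativity (so that left ideals coincide with two-sided ideals) and analyze a left ideal $I$ via its minimal elements under the divisibility preorder $a \preceq b \iff b \in Sa$, using a Dickson/Redei-style argument from commutative semigroup theory to conclude that every such $I$ is finitely generated. Once weak left noetherianity is established, Theorem~\ref{equiv} again delivers superstability. Some care is required here, since an unrestricted commutative monoid need not be weakly noetherian, so the argument will depend on the finer structure of the commutative monoids intended by the corollary.
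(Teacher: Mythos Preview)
Your overall strategy---reduce via Theorem~\ref{equiv} to showing $S$ is weakly left noetherian---matches the paper exactly, and your handling of the finite and group cases is correct (the paper simply calls these ``clear'').

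The commutative case is where your proposal falls short: you sketch a Dickson/R\'{e}dei-style plan, then explicitly retreat, writing that ``an unrestricted commutative monoid need not be weakly noetherian'' and that the argument will depend on some unstated finer structure. That is not a proof; the case is left open. The paper dispatches it in one line by citing \cite[3.8]{mil} for the claim that commutative monoids are weakly left noetherian, so to align with the paper you would simply invoke that reference. That said, your hesitation is not unfounded: the free commutative monoid on countably many generators $\{x_i : i<\omega\}$ has its maximal ideal $S\setminus\{1\}$ not finitely generated (any finite generating set involves only finitely many of the $x_i$, and hence cannot cover the remaining generators), so the blanket assertion that every commutative monoid is weakly left noetherian appears to fail. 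Either \cite[3.8]{mil} carries an extra hypothesis (such as finite generation) that the corollary is silently importing, or the commutative clause needs qualification---worth flagging, though this concerns the statement itself rather than your proof strategy.
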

\begin{proof}
It is clear that finite monoids and groups are weakly noetherian and it follows from  \cite{re} that finitely generated commutative monoids are weakly noetherian. Hence in each case we have that $ (S\text{-acts}, \leq)$ is superstable by Theorem \ref{equiv}. 
\end{proof}

\begin{remark}\label{compare}
Recall that a monoid $S$ is right reversible if $St \cap Sr \neq \emptyset$ for every $t, r \in S$. Moreover, $S$ is right reversible if and only if all coproducts of injective left acts are injective (see for example \cite[\S III.1.13]{kkm}). Therefore any weakly left noetherian monoid which is not a right reversible monoid satisfies that  $ (S\text{-acts}, \leq)$ is superstable but there are coproducts of injective left acts which are not injective.  An explicit example for $S$ is  $\{ \begin{pmatrix} 1 & x \\ 0 & 0  \end{pmatrix} : x \in \mathbb{Z}_{2} \}^1$ with matrix products where the $1$ stands for adding an identity.

The previous  example shows that the work done in \cite[\S 5]{mj} for classes of modules cannot be extended to this setting putting a bound on how far the arguments given in \cite[\S 5]{mj} can be extended. This partially solves the problem stated in the last line of \cite{mj} which asks to determine the relationship between superstability and closure under coproducts of injective objects in arbitrary categories.
\end{remark}

As in \cite[\S 3]{mlim}, it is possible to extend Theorem \ref{equiv} to the strictly stable setting. Since the arguments of \cite[\S 3]{mlim} work for acts, we only record the main result which is an extension of  \cite[3.17]{mlim} from modules to acts. 

\begin{theorem}\label{main-f} Let $n \geq 0$. The following are equivalent.

\begin{enumerate}
\item $\gamma(S)= \aleph_n$.
\item $ (S\text{-acts}, \leq)$ has exactly $n +1 $ non-isomorphic $\lambda$-limit models for every $\lambda \geq 2^{\operatorname{card}(S) + \aleph_0}$ such that $\Kk_{\aact}$ is stable in $\lambda$. 
\end{enumerate}
\end{theorem}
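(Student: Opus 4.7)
The strategy is to follow \cite[3.17]{mlim}, substituting the tools developed in Sections~3 and~4 of the present paper at each step. Fix $\lambda \geq 2^{\operatorname{card}(S)+\aleph_0}$ at which $\Kk_{\aact}$ is stable (which exists by Theorem~\ref{sta-2} whenever $\lambda^{<\gamma_r(S)}=\lambda$). By Remark~\ref{e-limit}, for every limit ordinal $\alpha < \lambda^+$ there is a unique (up to isomorphism) $(\lambda,\alpha)$-limit model, and a standard back-and-forth argument shows its isomorphism type depends only on $\operatorname{cf}(\alpha)$. Hence the isomorphism classes of $\lambda$-limit models are indexed by the possible values of $\operatorname{cf}(\alpha)$.

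For $(1) \Rightarrow (2)$: Assume $\gamma_r(S) = \aleph_n$. Corollary~\ref{long-lim} collapses all chains of cofinality $\geq \aleph_n$ into a single isomorphism class, leaving at most $n+1$ classes, indexed by $\aleph_0,\aleph_1,\ldots,\aleph_{n-1}$ and the ``long'' class. To show these $n+1$ classes are genuinely distinct, I would use the injectivity invariants of Lemmas~\ref{l-inj} and~\ref{non-inj}: for each $k \leq n-1$ the $(\lambda,\aleph_k)$-limit is $\aleph_k$-injective, and the long class is injective (by the moreover part of Lemma~\ref{l-inj}, since $\operatorname{cf}(\alpha) \geq \gamma_r(S)$). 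To separate the $k$-th class from the $j$-th for $k<j\leq n$, I would exhibit a strictly $\aleph_k$-generated left ideal of $S$ and apply Lemma~\ref{non-inj}, which then forces the $(\lambda,\aleph_k)$-limit to fail $\aleph_{k+1}$-injectivity while the $(\lambda,\aleph_j)$-limit remains $\aleph_j$-injective and in particular $\aleph_{k+1}$-injective.

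For $(2) \Rightarrow (1)$: Argue by contrapositive across the possible values of $\gamma_r(S)$. If $\gamma_r(S)=\aleph_0$, then Theorem~\ref{equiv} gives exactly one limit model, contradicting $n+1 \geq 2$. If $\gamma_r(S)=\aleph_m$ with $m\geq 1$ and $m\neq n$, the forward direction yields exactly $m+1\neq n+1$ classes. If $\gamma_r(S)$ is larger than every $\aleph_m$, the same injectivity argument produces strictly more than $n+1$ distinct classes (one for each $\aleph_k < \gamma_r(S)$ plus the long class), again contradicting the hypothesis.

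The main obstacle is the step in $(1) \Rightarrow (2)$ of extracting, for each $k < n$, a strictly $\aleph_k$-generated left ideal from the single assumption $\gamma(S)=\aleph_n$. By definition, $\gamma(S)=\aleph_n$ only guarantees that for every $\mu < \aleph_n$ there is some ideal not $(<\mu)$-generated, so producing an ideal whose minimal generating set has cardinality exactly $\aleph_k$ (and not merely at least $\aleph_k$) requires careful algebraic bookkeeping. This is the delicate combinatorial step handled in \cite[\S 3]{mlim} for modules, and the analogous construction for monoids adapts verbatim using the subact analysis via $\gamma_r(S)$ developed in Section~3 (in particular Lemma~\ref{small-nf} and Corollary~\ref{sub-cyc}).
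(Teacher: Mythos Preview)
Your proposal is correct and takes essentially the same approach as the paper: the paper does not give a proof at all but simply asserts that ``the arguments of \cite[\S 3]{mlim} work for acts'' and records the statement, and your sketch is precisely the [mlim] argument with the act-theoretic lemmas of Sections~3--4 (Lemma~\ref{l-inj}, Lemma~\ref{non-inj}, Corollary~\ref{long-lim}) substituted in. One small correction: the ``delicate combinatorial step'' of producing a strictly $\aleph_k$-generated ideal for each $k<n$ does not really need Lemma~\ref{small-nf} or Corollary~\ref{sub-cyc}; from a minimal generating set $\{a_\alpha:\alpha<\aleph_{n-1}\}$ of a strictly $\aleph_{n-1}$-generated ideal, the subideal $S\{a_\alpha:\alpha<\aleph_k\}$ is already strictly $\aleph_k$-generated, and the verification uses only that acts are unary (each element of a generating set lies in $Sa_\alpha$ for a \emph{single} $\alpha$), which is in fact simpler than the module case.
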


 \begin{remark} It is possible to provide a proof of Theorem \ref{equiv} and Theorem \ref{main-f} using heavy model theoretic machinery. More precisely, one can obtain the results using Lemma \ref{g=k} and  \cite[4.8, 6.1, 5.1]{limit25}. As the arguments of this paper are significantly shorter than the general arguments we decided to avoid the abstract machinery. Furthermore, the proofs used in this paper were obtained prior to \cite{limit25}.
 
 \end{remark}

\subsection{Algebraic results} We present a couple of algebraic results that follow from the study of limit models. 

The first result extends \cite[3.5, 3.6]{mlim} from modules to acts. As the proof is analogous to that for modules, we only sketch the proof.

\begin{theorem}\label{inj-equiv}Assume $\kappa$ is a regular cardinal. The following are equivalent.
\begin{enumerate}
\item $S$ is weakly left  $(<\kappa)$-noetherian.
\item  Every $\kappa$-injective act is weakly injective. 
\item Every $\kappa$-injective act is $\kappa^+$-injective. 
\item The $(\lambda, \kappa)$-limit model is $\kappa^+$-injective for every $\lambda$ such that $\Kk_{\aact}$ is stable in $\lambda$ and $ \kappa \leq \lambda$. 
\end{enumerate}
\end{theorem}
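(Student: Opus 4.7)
The plan is to prove that $(1)$ implies each of $(2)$, $(3)$, $(4)$ directly and that the failure of $(1)$ contradicts each of $(2)$, $(3)$, $(4)$ via the $(\lambda,\kappa)$-limit model. The common ingredient in both directions is the following construction, which I would establish first.

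If $S$ is not weakly $(<\kappa)$-noetherian, then $S$ admits a strictly $\kappa$-generated left ideal $J$. Starting from an ideal $I$ that cannot be generated by fewer than $\kappa$ elements, one inductively picks $a_\alpha \in I \setminus S\{a_\beta : \beta < \alpha\}$ for each $\alpha < \kappa$ and sets $J := S\{a_\alpha : \alpha < \kappa\}$. Regularity of $\kappa$ is essential: if $J$ were generated by $\mu < \kappa$ elements $\{b_\gamma : \gamma < \mu\}$, then each $b_\gamma$ lies in $S\{a_\beta : \beta < \xi_\gamma\}$ for some $\xi_\gamma < \kappa$, so $\sup_\gamma \xi_\gamma < \kappa$ would bound all the $a_\alpha$'s, contradicting their construction.

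For $(1) \Rightarrow (2), (3), (4)$: if $S$ is weakly $(<\kappa)$-noetherian, then $\gamma(S) \leq \kappa$, and a quick case analysis using regularity of $\kappa$ gives $\gamma_r(S) \leq \kappa$ as well. Every ideal of $S$ is $<\kappa$-generated, so any $\kappa$-injective act satisfies the extension property on every ideal; by the Skornjakov-Baer criterion it is injective, hence both $\kappa^+$-injective and weakly injective, giving $(2)$ and $(3)$. For $(4)$, Lemma \ref{l-inj} applied to $\delta = \kappa$ with $\text{cf}(\kappa) = \kappa \geq \gamma_r(S)$ shows the $(\lambda, \kappa)$-limit model is injective, so in particular $\kappa^+$-injective.

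For the converses, assume $(1)$ fails and let $J$ be the strictly $\kappa$-generated ideal above; this forces $\gamma(S) \geq \kappa^+$. Pick $\lambda \geq \kappa$ such that $\Kk_{\aact}$ is stable in $\lambda$, which exists by Theorem \ref{sta-2}, and let $M$ be the $(\lambda, \kappa)$-limit model. By Lemma \ref{l-inj} $M$ is $\kappa$-injective, while Lemma \ref{non-inj} applied to $J$ shows $M$ is not $\kappa^+$-injective, contradicting $(3)$ and $(4)$ directly. For $(2)$, since $\gamma(S) \geq \kappa^+$ every ideal generated by fewer than $\kappa^+$ elements is also generated by fewer than $\gamma(S)$ elements, so $\gamma(S)$-injectivity (i.e., weak injectivity) implies $\kappa^+$-injectivity; thus the weak injectivity of $M$ forced by $(2)$ again contradicts Lemma \ref{non-inj}. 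The main obstacle is the construction of the strictly $\kappa$-generated ideal, where regularity of $\kappa$ is consumed; once this is in hand, the rest reduces to straightforward applications of Lemmas \ref{l-inj} and \ref{non-inj} together with the basic relationship between $\gamma(S)$ and the two injectivity notions.
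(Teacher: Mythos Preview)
Your overall strategy is sound and close to the paper's: the paper argues the cycle $(1)\Rightarrow(2)\Rightarrow(3)\Rightarrow(4)\Rightarrow(1)$ using exactly Lemmas~\ref{l-inj} and~\ref{non-inj}, while you route every equivalence through $(1)$. The explicit construction of a strictly $\kappa$-generated ideal from the failure of $(1)$, with regularity of $\kappa$ used to bound the supremum of the $\xi_\gamma$, is a useful detail that the paper leaves implicit in its appeal to Lemma~\ref{non-inj}.

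There is, however, one genuine slip. In your argument for $(1)\Rightarrow(2),(3)$ you assert that a $\kappa$-injective act satisfying the extension property on every ideal is \emph{injective} by the Skornjakov--Baer criterion. This is false for acts: the paper remarks explicitly that a $\gamma(S)$-injective act (one with the extension property against all ideals) need not be injective, and the Skornjakov--Baer criterion for acts demands extension against all embeddings $A\hookrightarrow B$ with $B$ cyclic, not merely against inclusions $I\hookrightarrow S$ of ideals. What your argument actually yields is that the act is \emph{weakly injective}, which is exactly the conclusion of $(2)$ and immediately gives $(3)$ since weak injectivity means extension against every ideal. So the implications survive, but you should drop the appeal to Skornjakov--Baer and the word ``injective'' at that step. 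Your separate use of the moreover clause of Lemma~\ref{l-inj} for $(1)\Rightarrow(4)$ is correct, since that lemma genuinely produces injectivity for the limit model via the cyclic-act form of the criterion combined with Corollary~\ref{sub-cyc}.
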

\begin{proof}
$(1) \Rightarrow (2) \Rightarrow (3)$ are clear and $(3) \Rightarrow (4)$ follows from Lemma \ref{l-inj}. Finally, $(4) \Rightarrow (1)$ follows from Lemma \ref{non-inj}. \end{proof}

\begin{remark}
The  result above also generalizes  \cite[3.7]{g-co} which obtains  the equivalence of (1), (2) and (3) for  $\kappa = \aleph_0$, i.e., for weakly noetherian monoids. The methods used in this paper are completely different to those of \cite{g-co}.
\end{remark}

Recall that an act $A$ is \emph{absolutely pure} if for every $B \leq C$ such that $B$ is finitely generated and $C$ finitely presented  and $f: B \to A$ an $S$-homomorphism there is an $S$-homomorphism $g : C \to A$ such that $g(b) = f(b)$ for every $b \in B$ \cite[3.8]{g-c}. 

\begin{lemma}\label{o-abs}
Assume $\lambda \geq \operatorname{card} (S)+ \aleph_0$ and $\delta < \lambda^+$ is a limit ordinal.
If $A$ is a $(\lambda, \delta)$-limit model, then $A$ is absolutely pure.
\end{lemma}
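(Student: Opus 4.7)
The plan is to exploit the universal property of successor stages in the chain witnessing $A$ as a limit model, via a standard pushout argument. Fix an increasing continuous chain $\{M_i : i < \delta\} \subseteq (\Kk_{\aact})_\lambda$ with $A = \bigcup_{i<\delta} M_i$ and $M_{i+1}$ universal over $M_i$ for every $i < \delta$. Given $B \leq C$ with $B$ finitely generated and $C$ finitely presented, together with a homomorphism $f : B \to A$, I would first observe that the image $f[B]$ is finitely generated: writing $B = Sb_1 \cup \cdots \cup Sb_n$, each $f(b_k)$ lies in some $M_{i_k}$, and since $\delta$ is a limit ordinal I can pick a single $i < \delta$ with $i \geq i_k$ for every $k$, giving $f[B] \leq M_i$. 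So $f$ factors as $B \to M_i$.

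Next I would form the pushout of $f : B \to M_i$ along the inclusion $B \hookrightarrow C$ in $S$-$\Act$, producing a square with fourth vertex $P$ and canonical maps $q_1 : C \to P$ and $q_2 : M_i \to P$. Because $B \hookrightarrow C$ is a monomorphism and monomorphisms are stable under pushouts in $S$-$\Act$ (recorded in Section 2), $q_2$ is a monomorphism, so I identify $M_i$ with its image in $P$. Since $C$ is finitely presented, $\card{C} \leq \operatorname{card}(S) + \aleph_0 \leq \lambda$, hence $\ccard{P} \leq \lambda$. If $\ccard{P} < \lambda$ I enlarge $P$ to some $P^* \in (\Kk_{\aact})_\lambda$ containing $M_i$ as a subact, for example by taking a disjoint-amalgamation pushout with a model of size $\lambda$.

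Applying the universal property of $M_{i+1}$ over $M_i$ yields a $\Kk$-embedding $h : P^* \to M_{i+1}$ that fixes $M_i$ pointwise. The composition
\[
g : C \xrightarrow{q_1} P \hookrightarrow P^* \xrightarrow{h} M_{i+1} \hookrightarrow A
\]
is the desired extension: for $b \in B$, $q_1(b) = q_2(f(b)) = f(b) \in M_i$ under the identification above, and since $h$ fixes $M_i$ one has $g(b) = h(f(b)) = f(b)$. Thus every finitely generated $B \leq C$ (with $C$ finitely presented) has the required lifting property, so $A$ is absolutely pure. The argument is essentially routine; the only point requiring any thought is the initial observation $f[B] \leq M_i$, which relies only on continuity of the chain and finite generation of $B$, so there is no genuine obstacle.
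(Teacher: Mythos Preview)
Your proof is correct and follows essentially the same route as the paper's: locate $f[B]$ inside some $M_i$ using finite generation of $B$, push out along $B\hookrightarrow C$, and then apply universality of $M_{i+1}$ over $M_i$ to obtain the extension. The only difference is cosmetic: since $M_i$ already has cardinality $\lambda$ and embeds into $P$, the pushout $P$ automatically has cardinality $\lambda$, so your enlargement to $P^*$ is unnecessary (though harmless).
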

\begin{proof} Let $B \leq C$ such that $B$ is finitely generated, $C$ is finitely presented  and $f: B \to A$ is an S-homomorphism. Fix $\{ A_i : i < \delta \}$ a witness to the fact that $A$ is a $(\lambda, \delta)$-limit model. As $B$ is finitely generated there is an $i < \delta$ such that $f[B] \subseteq A_i$. As $C \in (\Kk_{\aact})_{\operatorname{card}(S) + \aleph_0}$ and $A_i \in (\Kk_{\aact})_\lambda$, then taking the pushout of $B \leq C$ and $f: B \to A_i$ there are $f_1: C \to P$ an S-homomorphism and $f_i: A_i \to P$ an $S$-monomorphism such that $P \in  (\Kk_{\aact})_\lambda$ and $f_i \circ f = f_1 \rest B$. Then there is $Q \in  (\Kk_{\aact})_\lambda$ and $h: Q \to P$ an isomorphism  with $A_i \leq Q$ and $h \rest A_i = f_i$. So we have that the following diagram commutes:

  \[
 \xymatrix{\ar @{} [dr]  C \ar[r]^{h^{-1}\circ f_1}  & Q \\
 B \ar[u]^{\leq} \ar[r]^{f} &  A_{i} \ar[u]_{\leq } 
 }
\]
Since $A_{i+1}$ is universal over $A_i$ there is $f_2 : Q \xrightarrow[A_i]{} A_{i+1}$. It is easy to show that $g = f_2 \circ h^{-1} \circ f_1 : C \to A $ is as required.
\end{proof}

The next result provides a characterization of weakly noetherian monoids using absolutely pure acts extending a classical result of ring theory \cite[Theorem 3]{meg}.

\begin{cor}\label{abs} Assume $S$ is a monoid.
Every absolutely pure $S$-act is weakly injective if and only if $S$ is weakly left noetherian.
\end{cor}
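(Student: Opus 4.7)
The plan is to prove each direction separately. The backward direction will be a quick application of the definition of absolute purity, while the forward direction will be the heart of the argument and will use limit models essentially, via Lemma \ref{o-abs} and Lemma \ref{non-inj}.

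For the backward direction, I assume $S$ is weakly left noetherian and let $A$ be absolutely pure. To show $A$ is weakly injective, I take an arbitrary ideal $I \leq S$ and a homomorphism $f : I \to A$, and aim to extend $f$ to a morphism $S \to A$. By weak noetherianity, $I$ is finitely generated. The key observation is that $S$ viewed as a left $S$-act is the free cyclic act on the generator $1$, hence is finitely presented. Therefore the pair $I \leq S$ satisfies the hypotheses of absolute purity (finitely generated subact of a finitely presented act), yielding the desired extension. Since the ideal was arbitrary, $A$ is $\gamma(S)$-injective, i.e., weakly injective.

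For the forward direction, I argue by contrapositive: assume $S$ is \emph{not} weakly left noetherian and construct an absolutely pure act which is not weakly injective. Since some ideal fails to be finitely generated, a straightforward recursion produces $\{a_n : n < \omega\} \subseteq S$ with $a_{n+1} \notin S\{a_0, \dots, a_n\}$, so that $I = S\{a_n : n < \omega\}$ is strictly $\aleph_0$-generated. In particular $\gamma(S) \geq \aleph_1$. Now pick any $\lambda \geq 2^{\operatorname{card}(S)+\aleph_0}$ at which $\Kk_{\aact}$ is stable (such $\lambda$ exist by Lemma \ref{sta}), and let $M$ be the $(\lambda, \omega)$-limit model.

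By Lemma \ref{o-abs}, $M$ is absolutely pure. On the other hand, Lemma \ref{non-inj} applied with $\kappa = \aleph_0$ to the strictly $\aleph_0$-generated ideal $I$ shows that $M$ is \emph{not} $\aleph_1$-injective. Since $\gamma(S) \geq \aleph_1$, any weakly injective (i.e., $\gamma(S)$-injective) act is in particular $\aleph_1$-injective, so $M$ cannot be weakly injective. Thus $M$ is an absolutely pure act that is not weakly injective, completing the contrapositive.

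I do not expect a significant obstacle; the real content is conceptual rather than computational, namely using the $(\lambda, \omega)$-limit model as a bridge between the two sides of the equivalence: absolute purity is automatic at every limit model, while failure of $\aleph_1$-injectivity at the \emph{short} limit model detects exactly the presence of strictly $\aleph_0$-generated ideals. This is what Remark \ref{dif-mod} is pointing at when it says the argument, unlike the module-theoretic one, uses limit models in an essential way.
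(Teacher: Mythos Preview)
Your proof is correct and follows essentially the same approach as the paper: the forward direction is identical in content (the paper invokes Theorem \ref{inj-equiv}.(4), whose proof of $(4)\Rightarrow(1)$ is precisely Lemma \ref{non-inj}, which you cite directly), and your backward direction is a direct verification of what the paper obtains by citing \cite[Proposition 4]{normak} together with Theorem \ref{inj-equiv}.(2). The only cosmetic difference is that you unpack both external references into an explicit one-line check that $I\leq S$ meets the hypotheses of absolute purity, which is arguably cleaner but not a different strategy.
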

\begin{proof}

$\Rightarrow$:  Let $A$ be the  $(\lambda, \omega)$-limit model for $\lambda$ such that $\Kk_{\aact}$ is stable in $\lambda$. Then $A$ is absolutely pure by Lemma \ref{o-abs}. Hence $A$ is weakly injective by assumption. Therefore, $S$ is weakly left noetherian by Theorem \ref{inj-equiv}.(4). 

$\Leftarrow$: Let $A$ be an absolutely pure act. Then $A$ is $\aleph_0$-injective as $S$ is a finitely presented left $S$-act. Hence $A$ is weakly injective by Theorem \ref{inj-equiv}.(2).
\end{proof}

\begin{remark}\label{dif-mod}
The result above is an extension of the following classical result for rings \cite[Theorem 3]{meg}: every absolutely pure module is injective if and only if $R$ is left noetherian. There are two key differences: the equivalence for acts substituting \emph{weakly injective} for \emph{injective} fails as noticed in \cite[p. 168]{normak} and being weakly injective  does not imply absolute purity even if $S$ is weakly noetherian \cite[p. 213]{kkm}.

Moreover, the argument for modules is completely different as it relies on the fact that a ring is noetherian if and only if injective modules are closed under direct sums. As mentioned earlier this result does not hold for acts (see Remark \ref{compare}).  

Furthermore, the previous result  also extends the following result for acts \cite[Proposition 8]{normak} by providing an equivalence by replacing \emph{injective} by \emph{weakly injective} in the statement of \cite{normak}. 
\end{remark}
\appendix
\section{}


In this section, we will show that some of our main results can be obtained using methods from category theory. 

Monomorphisms of $S$-acts are closed under \textit{transfinite compositions}. This means that given a continuous chain 
$(f_{ij}\colon A_i \to A_j)_{i\leq j< \lambda}$ of monomorphisms where $\lambda$ is a limit ordinal then a colimit of $(f_{0i}\colon A_0 \to A_i)_{i<\lambda}$ is a monomorphism. Recall that a chain
$(f_{ij}\colon A_i \to A_j)_{i\leq j< \lambda}$ is continuous
if $f_{i\kappa}$ is a colimit of $(f_{ij}\colon A_i \to A_j)_{i\leq j< \kappa}$ 
for every limit ordinal $\kappa <\lambda$.

It follows from the previous paragraph and Section 2.1 that monomorphisms form a \emph{cellular class} in $S$-$\Act$, i.e.,
a class closed under pushouts along homomorphisms and under transfinite compositions.

 A class $\cx$ of monomorphisms \textit{cellularly generates}
if its closure under pushouts and transfinite compositions is the class of all monomorphisms.

\begin{lemma}\label{cofgen2} 
Monomorphisms of $S$-$\Acts$ are cellularly generated by
$A\to B$ with $B$ cyclic.
\end{lemma}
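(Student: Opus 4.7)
The plan is to take an arbitrary monomorphism $f \colon A \to B$ in $S$-$\Acts$ and exhibit it as a transfinite composition of pushouts of monomorphisms with cyclic codomain. First I would reduce to the case where $A$ is a subact of $B$ and $f$ is the inclusion. Then I would well-order $B \setminus A = \{b_\alpha : \alpha < \lambda\}$ and define a continuous chain of subacts of $B$ by setting $A_0 = A$, $A_{\alpha+1} = A_\alpha \cup Sb_\alpha$, and $A_\beta = \bigcup_{\alpha < \beta} A_\alpha$ at limit ordinals $\beta$. Then $A_\lambda = B$, and the inclusion $A \hookrightarrow B$ is the composite of this chain.

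The key step is to identify each successor inclusion as a pushout of a generator. I would show that the square
$$
\xymatrix@=2pc{
A_\alpha \cap Sb_\alpha \ar[r] \ar[d] & Sb_\alpha \ar[d] \\
A_\alpha \ar[r] & A_{\alpha+1}
}
$$
with the obvious inclusions is a pushout in $S$-$\Acts$. The top arrow is a monomorphism into the cyclic act $Sb_\alpha$, so it lies in the generating class (in the degenerate case $A_\alpha \cap Sb_\alpha = \emptyset$ one simply pushes out along $\emptyset \to Sb_\alpha$, whose codomain is still cyclic). To verify the pushout, I would invoke the fact from Section 2.1 that the forgetful functor $U \colon S\text{-}\Acts \to \Set$ preserves limits and colimits; hence it suffices to compute in $\Set$, where the pushout of two inclusions over their intersection is their union, giving $A_\alpha \cup Sb_\alpha = A_{\alpha+1}$. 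The $S$-actions on $A_\alpha$ and $Sb_\alpha$ agree on the intersection because both are subacts of the ambient $B$, so the induced action on the amalgamated union is the one inherited from $B$.

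Assembling these successor pushouts via transfinite composition, and using the continuity of the chain to handle limit stages, exhibits $A \hookrightarrow B$ as an element of the cellular closure of monomorphisms with cyclic codomain, which is the statement. I expect the only subtlety to be the pushout verification at successor stages; once the colimit-preservation of the forgetful functor is used, this is a routine calculation, with mild care needed for the case where $A_\alpha \cap Sb_\alpha$ is empty.
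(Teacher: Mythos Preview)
Your proof is correct and takes essentially the same approach as the paper: both build the codomain from the domain by a transfinite chain that adjoins one cyclic subact $Sb$ at each successor step, exhibiting that step as the pushout of the inclusion $A_\alpha \cap Sb \hookrightarrow Sb$. The only difference is cosmetic---the paper phrases the successor step via the (epi, mono)-factorization of $S \to L$ and invokes effective unions (Remark~\ref{e-squares}) to identify the abstract pushout with a subact of $L$, whereas you work with subacts throughout and cite colimit-preservation of the forgetful functor; these are equivalent justifications of the same fact.
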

\begin{proof}
Let $f:K\to L$ be a monomorphism of $S$-acts. We can assume $f$ to be an inclusion. Consider $a\in L\setminus K$. It corresponds to $h:S\to L$ where $h(s)=sa$ for every $s \in S$. Let
$$
S \xrightarrow{\ h_1 \ } B \xrightarrow{\ h_2 \ } L 
$$
be the (epimorphism, monomorphism) factorization of $h$. Then $B$ is cyclic. Consider the pullback
$$
		\xymatrix@=2pc{
				A \ar [r]^{g}\ar[d]_{u} & B \ar[d]^{h_2}\\
			K  \ar [r]_{f}& L
		}
		$$ 
and the pushout
$$
		\xymatrix@=2pc{
				A \ar [r]^{g}\ar[d]_{u} & B \ar[d]^{v}\\
			K  \ar [r]_{f_{01}}& K_1
		}
$$ 
Since $S$-$\Acts$ has effective unions by Remark \ref{e-squares}, the induced morphism $t:K_1\to L$ is a monomorphism. Hence $h=tvh_1$. Continuing this procedure (in limit steps we take colimits), we get $f$ as a transfinite composition of pushouts of monomorphisms $A\to B$
with $B$ cyclic.
\end{proof}

\begin{remark}
Since the class of monomorphisms to which an object is injective is closed under pushouts along homomorphisms and under transfinite compositions, Lemma \ref{cofgen2} implies the Skornjakov--Baer criterion for injectivity (see for example \cite[\S III.1.8]{kkm}).
\end{remark}

We obtain an slightly stronger result than Corollary \ref{sub-cyc}. 

\begin{lemma}\label{gen}
If $A$ is a subact of $B$ and $B$ is cyclic, then $A$ is generated by fewer than $\gamma(S)$ elements.

\end{lemma}
\begin{proof}
Let $B$ be a cyclic act. Hence it admits a surjective homomorphism $f:S\to B$.
Consider a subact $u:A\to B$ and take the pullback
$$
		\xymatrix@=2pc{
				P \ar [r]^{\bar{f}}\ar[d]_{\bar{u}} & A \ar[d]^{u}\\
			S  \ar [r]_{f}& B
		}
		$$ 
Then $\bar{u}$ is a monomorphism and, since $S$-$\Act$ is regular, $\bar{f}$ is surjective. Hence $A$ is generated by fewer than $\gamma(S)$ elements as a quotient of  $P$.
\end{proof}

\begin{remark}\label{soa}
The embedding of an object $K$ to an injective object can be obtained by the small object argument. It is given by a transfinite composition of a continuous chain $(h_{ij})$ of monomorphisms where
$K_0=K$ and $h_{ii+1}$ is given by the pushout 
$$
		\xymatrix@=2pc{
				\coprod_{(u,f)} A_{(u,f)} \ar [r]^{}\ar[d]_{} & 
\coprod_{(u,f)} B_{(u,f)} \ar[d]^{}\\
			K_i  \ar [r]_{h_{ii+1}}& K_{i+1}
		}
$$ 
given by spans
$$
		\xymatrix@=2pc{
				A_{(u,f)} \ar [r]^{f}\ar[d]_{u} & B_{(u,f)} \\
			K_i  
		}
$$ 
where $f:A_{(u,f)}\to B_{(u,f)}$ are monomorphisms between cyclic acts and $u:A_{(u,f)}\to K_i$ are homomorphisms (see \cite[12.2.2]{r}). This pushout can be replaced by a transfinite composition of pushouts of corresponding spans. Or,
by a multipushout of all the spans above (see 
\cite[II.4 and II.7]{ahrt}). 
Pushouts of corresponding spans will be called parts of the multipushout.
\end{remark}

Lemma \ref{cofgen2} yields another proof of Lemma \ref{sta}.

\begin{proof}[Proof of Lemma \ref{sta}]
Following Remark \ref{limit1}, we have to prove that $\Kk_{\aact}$ has a $\lambda$-limit model. For this, it suffices to show that
$\Kk_{\aact}$ has a universal object $K^\ast$ of size $\lambda$ over every object $K$ of size $\lambda$. We will prove that $h:K\to K^\ast$ is given by a small object argument of length $\lambda$.
We will use multipushouts like in \cite[II.7]{ahrt} (see Remark 
\ref{soa}). Hence $h$ is a transfinite composition of a chain of monomorphisms $(h_{ij}:K_i\to K_j)_{i<j<\lambda}$ where $K_0=K$.

Since cyclic acts are given by congruences on $S$, the number of cyclic acts is $\leq 2^{\operatorname{card} (S)+\aleph_0}$. A cyclic act
has $\leq 2^{\operatorname{card}(S)}$ subacts. Hence the number of subacts $A$ of cyclic acts is 
$$
\leq 2^{\operatorname{card}(S)+\aleph_0}\cdot2^{\operatorname{card}(S)}
\leq 2^{\operatorname{card}(S)+\aleph_0}\leq\lambda.
 $$
Since every subact $A$ of a cyclic act $B$ is generated by $<\gamma(S)$ elements by Corollary \ref{gen}, the number of morphisms $A\to K$ is $\leq\lambda$. Since multipushouts in 
the construction do not increase sizes, the size of $K^\ast$ is $\lambda$. Moreover, $h$ is a monomorphism. We have to prove that for every monomorphism $f:K\to L$ where $L$ is of size $\lambda$ there is a monomorphism $g:L\to K^\ast$ such that $gf=h$.

Following Lemma \ref{cofgen2}, $f$ is a transfinite composition of a chain of monomorphisms $(f_{ij}:L_i\to L_j)_{i<j<\lambda}$ where $L_0=K$ and $f_{ii+1}$ is a pushout
$$
		\xymatrix@=2pc{
				A \ar [r]^{u}\ar[d]_{v} & B \ar[d]^{\bar{v}}\\
			L_i  \ar [r]_{f_{ii+1}}& L_{i+1}
		}
		$$ 
with $B$ cyclic. For $i=0$, this pushout is a part of a multiple pushout giving the first step $h_{01}:K\to K_1$ of the small object argument. Thus the induced morphism $g_1:L_1\to K_1$ is a monomorphism (see
\cite[Remark II.2 and the proof of II.7]{ahrt}). Next, for $i=1$, we have
pushouts
$$
		\xymatrix@=2pc{
		A \ar [r]^{u}\ar[d]_{v} & B \ar[d]^{\bar{v}}\\
				L_1 \ar [r]^{f_{12}}\ar[d]_{g_1} & L_2 \ar[d]^{\bar{g}_1}\\
			K_1  \ar [r]_{\bar{f}_{12}}& P
		}
		$$  
where the outer rectangle is a part of a multiple pushout giving the second step $h_{12}:K_1\to K_2$ of the small object argument.
Since $\bar{g}_1$ is a monomorphism because $g_1$ is a monomorphism
and the induced morphism $g'_2:P\to K_2$ is a monomorphism by \cite[Remark II.2 and the proof of II.7]{ahrt}, 
the composition $g_2=g'_2\bar{g}_1:L_2\to K_2$ is a monomorphism. By a transfinite iteration we get a monomorphism $g:L\to K^\ast$ such that $gf=h$.
\end{proof}


\begin{remark}\label{sat}
The object $K^\ast$ from the proof above is $\lambda$-saturated. 
For this, it suffices to show that, for every monomorphisms $f:A\to K^\ast$ and $g:A\to B$ where $A$ and $B$ have size $<\lambda$, there
is a monomorphism $f':B\to K^\ast$ such that $f'g=f$ (see \cite{lr}).
However, $f$ factorizes through some $K_i\to K$ by $f_1:A\to K_i$, which yields $f'':B\to K_{i+1}$ such that $h_{ii+1}f_1=f''g$. Then $f'$ is the composition of $K_{i+1}\to K^\ast$ with $f''$.
\end{remark}

We will give another proof of Theorem \ref{equiv}.
\begin{theorem-3}
Assume $S$ is a monoid. 
$S$ is weakly left noetherian if and only if $ (S\text{-acts}, \leq)$ is superstable.
\end{theorem-3}

\begin{proof}
$\Rightarrow$: If $S$ is weakly noetherian then, $\Kk_{\aact}$ is stable in $\lambda$ for every $\lambda \geq 2^{2^{\operatorname{card} (S)+ \aleph_0}}$ by Lemma  \ref{sta}. 
Following Remark \ref{tail}, $\Kk_{\aact}$ is superstable.

$\Leftarrow$:  Assume $\Kk_{\aact}$ is superstable and assume for the sake of contradiction that $S$ is not weakly left noetherian. Then there is a left ideal $A$ of $S$ which is not finitely generated. Thus there are $\{a_i:i<\omega\}$ in $A$ such that for every $i < \omega$,  $a_{i+1} \not\in S\{a_0,\dots,a_i\}$. 

Let $\lambda > \LS(\Kk_{\aact})$ be sufficiently large such that $\lambda$-saturated objects are closed under directed colimits in  $\Kk_{\aact}$. Such a $\lambda$ exists by \cite[1.3]{grva} since $\Kk_{\aact}$ is superstable.

We can construct $\{ B_i : i<\omega \}$ as in Lemma \ref{g=k} but replacing Condition (1) of that construction by $B_{i+1}=(B_i)^\ast$.
Following Remark \ref{sat}, for every $i >0$, $B_i$ is $\lambda$-saturated. Since $\lambda$-saturated objects are closed under directed colimits, $B_\omega = \bigcup_{i < \omega} B_i$ is $\lambda$-saturated. Thus there exists 
$f:S\to B_\omega$ whose restriction to 
$S\{a_i : i < \omega\}$ is the inclusion of $S\{a_i : i < \omega\}$ 
to $B_\omega$. Then there is $i<\omega$ such that $f(1)\in B_i$. Hence 
$$
a_{i+1}=f(a_{i+1})=f(a_{i+1}1)=a_{i+1}f(1)\in S\{a_0,\dots,a_i\},
$$ 
which is a contradiction to the choice of $a_{i+1}$. \end{proof}

\end{document}